\theoremstyle{plain}
\newtheorem{theorem}{Theorem}
\newtheorem{corollary}{Corollary}
\newtheorem{lemma}{Lemma}
\theoremstyle{remark}
\newtheorem*{remark}{Remark}
\theoremstyle{definition}
\begin{document}
	
	       \author[Sebastian Rosengren]{Sebastian Rosengren$^1$}      
	       \address{$^{1}$Department of Mathematics, Stockholm University, 106 91 Stockholm, Sweden.}
	       \email{$^1$rosengren@math.su.se}

\title[Multi-type Preferential Attachment]{A Multi-type Preferential Attachment Tree} 
\maketitle

\begin{abstract}
	A multi-type preferential attachment tree is introduced, and studied using general multi-type branching processes. For the $p$-type case we derive a framework for studying the tree where a type $i$ vertex generates new type $j$ vertices with rate $w_{ij}(n_1,n_2,\ldots, n_p)$ where $n_k$ is the number of type $k$ vertices previously generated by the type $i$ vertex, and $w_{ij}$ is a non-negative function from $\mathbb{N}^p$ to $\mathbb{R}$. The framework is then used to derive results for trees with more specific attachment rates.
	
	In the case with linear preferential attachment---where type $i$ vertices generate new type $j$ vertices with rate $w_{ij}(n_1,n_2,\ldots, n_p)=\gamma_{ij}(n_1+n_2+\dots +n_p)+\beta_{ij}$, where $\gamma_{ij}$ and $\beta_{ij}$ are positive constants---we show that under mild regularity conditions on the parameters $\{\gamma_{ij}\}, \{\beta_{ij}\}$ the asymptotic degree distribution of a vertex is a power law distribution. The asymptotic composition of the vertex population is also studied.\\
\end{abstract}

\keywords{Multi-type preferential attachment; preferential attachment tree;
	multi-type general branching process; 
	power law degree distribution; 
	asymptotic composition.}

\section{Introduction}
The preferential attachment tree is a well-studied model of random network growth where, traditionally, new vertices arrive according to some process (often at integer times $\{1,2,\ldots\}$) and upon arrival attach randomly to an existing vertex with probability proportional to that vertex's degree. This may also be interpreted as the existing vertex giving birth to a new vertex. Other versions exists where arriving vertices attach to \textit{more} than one vertex. This gives rise to preferential attachment networks while the special cases considered here results in preferential attachment trees.

Preferential attachment is particularly interesting for modeling empirical networks and trees since it gives rise to power law degree distributions---something that is often observed in network data, see e.g. \cite{barabasi} and \cite{hofstad_2016} where many applications of preferential attachment models can be found. In the context of network modeling the preferential attachment mechanism was first introduced in \cite{barabasi} and the degree sequence was rigorously analyzed in \cite{Bollobas}.

In this paper we extend the preferential attachment tree by allowing vertices to be of one of $p$ different types, where vertices of different types give birth to new vertices at different rates. A very general application of this dynamic is in recruitment networks, where the edges represent a recruitment. Different types could be male-female, or political affiliations. Males may then recruit males at a different rate than females. Another example could be a male-female network where edges represent friendship. In order for this to be a tree however, a friendship link could represent e.g. the first friend a person makes upon entering the graph. These two examples fit into our model framework, as described in more detail below.
More generally, the model can be used to describe multi-type trees exhibiting some level of homophily or heterophily.

Traditionally, preferential attachment is studied in discrete time, but we will develop the framework for studying the model in continuous time. The reason for this is that the model can then be analyzed as a general branching process in continuous time and existing results on such processes can then be applied. The downside of this approach is that applications are restricted to trees, something that is not always realistic. However, in a recent paper \cite{remco2} the authors shows a way of collapsing a branching process resulting in a (specific) preferential attachment network, while still being able to apply powerful branching process results. This is done for the single type case with affine rate functions, and extensions to the multi-type case seems possible. We will not use this approach, but instead stick to trees allowing general rate functions.

In studying the model we shall mainly be interested in the asymptotic degree distribution of a vertex. Usually, in one-type preferential attachment this is analyzed by first deriving a recursion for the \textit{expected} fraction of vertices having degree $k$ and showing that this converges. Stronger convergence results follow with an additional martingale argument, see \cite{Bollobas} for a rigorous treatment. The drawback of this method is that it depends heavily on the linear structure of the attachment dynamic (new vertices attach proportional to degree, and not to an arbitrary function of the degree), and it tends to be difficult to apply the method to more general preferential attachment models, e.g. multi-type preferential attachment with non-linear attachment functions.

An alternative way of studying preferential attachment models is to embed the process in continuous time and interpret it as a general branching process. This was first done by Rudas et al. \cite{RSA} and later extended by Deijfen \cite{Deijfen} who also allowed for vertex death. In this article we build on the methods used in \cite{RSA} and extend them to the multi-type case. Deijfen and Fitzner \cite{Deijfen2} have studied two-type preferential attachment heuristically, using different methods. There an arriving vertex is of type $i$ with probability $p_i$, then chooses which type $j$ to attach to with probability $\theta_{i,j}$ and then attaches to a vertex of that type with probability proportional to degree. In the context of multi-type graphs we also mention the seminal paper \cite{Jansson}, which treats a very general model that includes an instance resembling preferential attachment.
\subsection{The Model}
\label{modeldef}
We begin by defining the multi-type degree-based attachment tree, of which the multi-type preferential attachment tree is a special case. 
In the multi-type degree-based attachment tree a vertex may be of one of $p$ different types. The vertex population evolves in continuous time where new vertices are born, but may not die.
A type $i$ vertex currently having $n_k$ type $k$ children, $k=1,2,\ldots,p$ gives birth to a new type $j$ child at rate $w_{ij}(\vec{n})$, where $\vec{n} = (n_1,\ldots, n_p ) \in \mathbb{N}^p $---i.e. given the number of children $\vec{n}$ of respective types of a type $i$ vertex the time until the next birth is exponentially distributed with rate $w_{i1}(\vec{n})+\dots+w_{ip}(\vec{n})$ and the birth is of type $j$ with probability $\frac{w_{ij}(\vec{n})}{w_{i1}(\vec{n})+\dots+w_{ip}(\vec{n})}$. This process is then turned into a graph by letting the relation mother-child be represented by a directed edge from child to mother. The number of children of a vertex is then the vertex's in-degree, or total degree minus 1. The graph starts at time $t=0$ with \textit{one} vertex of any type present.

For a formal definition we let $\{\xi_{ij}(\cdot)\}$ $i,j=1,\ldots,p$ be a point process on $\mathbb{R_+}$, and $\xi_{ij}(t) = \xi_{ij}([0,t])$ the counting process associated with it. The process $\{\xi_{ij}(\cdot)\}$ is to be interpreted as the number of type $j$ vertices that a type $i$ vertex gives birth to in some time interval. 
Now, define the multi-type degree-based attachment tree as the stochastic process evolving according to the following dynamics (where births are represented as directed edges),

\begin{enumerate}[(i)]
	\item The weight functions satisfy $w_{ij}: \mathbb{N}^p\to (0, \infty),$  $i,j=1,2,\ldots,p$.
	\item $\{ \xi_{ij}(t),\ t\geq 0 \}$ is a counting process with $\xi_{ij}(0) = 0$, $i,j=1,2\ldots,p$.
	\item For $i=1,2,\ldots,p$ the process $\vec{\xi_i}(t) = (\xi_{i1}(t),\xi_{i2}(t),\ldots,\xi_{ip}(t))$ is a continuous-time Markov chain on $\mathbb{N}^p$ with transition rates
	$w_i(\vec{n}) = w_{i1}(\vec{n})+\dots+w_{ip}(\vec{n})$ and transition probabilities
	\\
	$p(\vec{n}\to (n_1,\ldots, n_j+1, \ldots,n_p )) = \frac{w_{ij}(\vec{n})}{w_{i1}(\vec{n})+\dots+w_{ip}(\vec{n})}$.
\end{enumerate}
This means that the vertex population evolves according to a multi-type general branching process, see \cite[Ch. 6]{NermanPHD} or \cite{multiypeextension}, and the graph can therefore be analyzed within that framework. When the weight functions are increasing in the type specific degrees we shall instead refer to the model as the multi-type preferential attachment tree, and reserve the pronoun degree-based for the case when the weight functions are more arbitrary. The preferential attachment tree is hence a special case of the degree-based attachment tree.

By looking at the graph at the times when new vertices arrive we can connect it to a discrete time degree-based attachment tree. Let $\mathcal{G}(t)$ denote the $\sigma$-field generated by the graph up until time $t$; and $\sigma_n$ the birth time of the $n$:th vertex. Also let $w^v(t)$ be the weight of the vertex $v$ at time $t$, i.e. $w^v(t) = \sum_{j=1}^{p} w_{ij}(\vec{\xi}_i(t))$ if $v$ is of type $i$. Similarly, let  $w_k^v(t) = w_{ik}(\vec{\xi}_i(t))$ if $v$ is of type $i$. Then the probability that the $(n+1)$:th new vertex arriving is of type $i$ given the current state of the graph is given by
\begin{align*}
p(i | \mathcal{G}(\sigma_n)) = \frac{\sum_{v=0}^{n} w_i^v(\sigma_n)}{ \sum_{v=0}^{n} w^v(\sigma_n) },
\end{align*}
where we have numbered vertices in the order they arrived. Given the current state of the graph the $(n+1)$:th vertex $v$ attaches to an old vertex $u$ with probability
\begin{align*}
p(v\to u | \mathcal{G}(\sigma_n) ) = \frac{ w^u(\sigma_n) }{\sum_{v=0}^{n} w^v(\sigma_n)}.
\end{align*}
For the one-type case $p=1$ with weight function $w(k)=k+1$ the graph tree that arises by inspecting the continuous time tree at the birth times coincides with the standard discrete time preferential attachment tree. This connection was noted already in \cite{RSA}.
\begin{remark}
	Using the connection between the continuous time model and the discrete time model we see that limit results for the continuous time case are valid for the discrete time case as long as $\sigma_n \to \infty$ when $n \to \infty$, i.e. the continuous time model does not explode in finite time.
\end{remark}
\subsubsection*{Two Examples}
In the introduction we suggested an application of the framework to model a network consisting of males and females where the edges represents friendship, e.g. the first friend a person makes when arriving to the network. Starting with one person, of any type, how does this example fit into the framework? Clearly, the number of friends a person has is an indicator of ones social skill and should determine the \textit{total} rate at which a person makes new friends. However, we assume that males make new male friends at a different rate than they make new female friends. For example, we can specify the model using the rate functions $w_{ij}(n_1,n_2) = \gamma_{ij}(n_1+n_2)$, $i,j=1,2$ ,where $1$ represents male and $2$ female, and $\gamma_{ij}$ are positive real numbers.

The final example was that of a recruitment graph. Take for instance the democratic and republican party in the U.S. A card-carrying democrat recruits democrats, to her party, at a different rates than republicans, and vice-versa. We can model the recruitment skill of a person in any way we like through $w_{ij}(n_1,n_2)$.

\subsection{Notation}
Finally, we set some notation. Again, $\vec{n} = (n_1,\ldots, n_p) \in \mathbb{N}^p$ and $\vec{\xi_i}(t) = (\xi_{i1}(t), \xi_{i2}(t), \ldots, \xi_{ip}(t)) \in \mathbb{N}^p $; the letters $i$ and $j$ will always refer to vertex type; the indexed letters $n_j$ will always refer to the number of type $j$ children of a vertex or, equivalently, its in-degree from type $j$ vertices. We will use the star notation $^*$ to denote the Laplace transform of a function or a matrix. Throughout we let $Z(t)$ denote the number of vertices in the graph at time $t$; $Z_i(t)$ the number of type $i$ vertices in the graph at time $t$; $Z_i^{\vec{n}}(t)$ the number of type $i$ vertices in the graph at time $t$ with $n_j$ type $j$ children, $j=1,\ldots,p$; $Z_i^{k}(t)$ the number of type $i$ vertices in the graph at time $t$ with $k$ children in total. Also, let $f\sim g$ denote that $\lim\limits_{t\to\infty} \frac{f(t)}{g(t)} = 1$.

\subsection{Results for Degree-based Preferential Attachment}
\label{notation}
The main results concern asymptotic composition of the population and degree distribution: we shall formulate conditions on $\{w_{ij}(\vec{n}) \}$ so that the ratios $p_i(t) = \frac{Z_i(t)}{Z(t)}$, $p_i(\vec{n}, t) = \frac{Z_i^{\vec{n}}(t)}{Z(t)}$, and  $p_i(k, t) = \frac{Z_i^{k}(t)}{Z(t)}$ converges almost surely as $t \to \infty$ and identify the limits.

For each Borel measurable set $A$ in $\mathbb{R}_+$ let $\mu_{ij}(A) = \mathbb{E}(\xi_{ij}(A))$---i.e. the expected number of type $j$ vertices born by a type $j$ vertex in the time set $A$. It follows that $\mu_{ij}(A)$ is a measure (see e.g. \cite[Lemma 1.1.1]{Point}) and we define for each $\theta >0$ the new measure $\mu_{ij}^{\theta}(A) = \mu_{ij}(A, \theta) $ on $\mathcal{B}(\mathbb{R}_+)$ through
\begin{align*}
\mu_{ij}(A, \theta) = \int_{A} e^{-\theta s} \mu_{ij}(ds), \qquad \theta >0.
\end{align*}

Define $\mu_{ij}^*(\theta) = \mu_{ij}([0,\infty], \theta)$ and the matrix $\mu^*(\theta) = [\mu_{ij}^*(\theta) ]_{ i,j=1,\ldots,p}$, and let $\rho(\mu^*(\theta))$ denote the largest eigenvalue of $\mu^*(\theta)$ (also known as the Perron-Frobenius root).
Throughout we shall assume the existence of a \textit{Malthusian} parameter $\alpha \in (0,\infty)$ such that $\rho(\mu^*(\alpha))=1$.
In fact we shall assume that
\begin{equation}
\label{A1}
\exists \theta_0 \in (0,\infty):\qquad \rho( \mu^*(\theta_0) ) \in( 1, \infty). \tag{A1}
\end{equation}
It follows from \eqref{A1} that the Malthusian parameter exists since $\rho(\theta)$ is continuous and decreasing to zero, see \cite[Lemma 9.1]{Mode}. By (A1) and \cite[Thm 6.1]{Mode} the process does not explode in finite time.

The Malthusian parameter $\alpha$ is, even for simple models, given by a very complicated expression. However, our assumptions ensure that it always exists and can be calculated numerically. For $\mu^*(\alpha)$ we denote the corresponding \textit{left} eigenvector by $u = (u_1,\ldots,u_p)$ and the \textit{right} eigenvector by $v=(v_1,\ldots,v_p)^t$. By the Perron-Frobenius theorem, both these exists, are positive, and can be normed so that
\begin{align*}
u_1v_1+u_2v_2+\dots+u_pv_p = v_1+v_2+\dots+v_p = 1.
\end{align*}
We will throughout assume that the eigenvectors are normed in this way.

We can now state our main results. The first one concerns the asymptotic composition of the vertex population.
\begin{theorem}
	\label{thm1}
	For the multi-type degree-based attachment model starting with one vertex of any type, with weight functions $\{ w_{ij}(\vec{n}) \}$ satisfying condition \eqref{A1}, the asymptotic proportion of type $i$ vertices satisfies
	\begin{align*}
	p_i(t) = \frac{Z_i(t)}{Z(t)} \to \frac{  u_i }{u_1+u_2+\dots+u_p } \text{ almost surely as } t \to \infty.
	\end{align*}
\end{theorem}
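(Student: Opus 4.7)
The plan is to exploit the identification of the model as a supercritical multi-type general (Crump-Mode-Jagers) branching process already made in the model definition, and to apply the multi-type strong law for counts weighted by random characteristics (see \cite[Ch.~6]{NermanPHD} and \cite{multiypeextension}). Assumption \eqref{A1} supplies the Malthusian parameter $\alpha>0$ and, via Perron-Frobenius, the positive left and right eigenvectors $u$ and $v$. Since $w_{ij}>0$ everywhere, the process is non-explosive and supercritical, so $Z(t)\to\infty$ almost surely and the ratio $Z_i(t)/Z(t)$ is eventually well defined.

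For each fixed $i$ I would introduce the type-indicator characteristic $\phi^{(i)}=(\phi^{(i)}_1,\dots,\phi^{(i)}_p)$ defined by $\phi^{(i)}_j(s)\equiv \mathbf{1}\{j=i\}$, so that the Nerman count $Z^{\phi^{(i)}}(t)$ equals $Z_i(t)$ and $\sum_i Z^{\phi^{(i)}}(t)=Z(t)$. Applied with this characteristic, the multi-type Nerman theorem produces an almost sure limit of the form
\begin{align*}
e^{-\alpha t}Z_i(t)\longrightarrow Y\cdot u_i,
\end{align*}
where $Y$ is a single random variable common to all $i$: it depends only on the root type $i_0$, the Malthusian martingale limit $W$, and a deterministic constant involving the derivative of $\theta\mapsto\rho(\mu^*(\theta))$ at $\theta=\alpha$. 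The factor $u_i$ appears because the integrals $\int_0^\infty e^{-\alpha s}\mathbb{E}[\phi^{(i)}_j(s)]\,ds=\mathbf{1}\{j=i\}/\alpha$ pick out the $i$th entry when the limit theorem weights them by the left eigenvector $u$. Summing over $i$ then gives $e^{-\alpha t}Z(t)\to Y\cdot(u_1+\dots+u_p)$, and dividing the two limits yields the stated ratio.

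The main obstacle will be verifying the hypotheses of the multi-type Nerman theorem for the reproduction processes $\{\xi_{ij}\}$ given only \eqref{A1}. Two ingredients are typically needed: direct Riemann integrability of $s\mapsto e^{-\alpha s}\mathbb{E}[\phi^{(i)}_j(s)]$, which is immediate here since the characteristics are bounded indicators constant in $s$; and an $x\log x$-type moment condition on the offspring processes ensuring that $W$ is strictly positive on non-extinction, so that $Y>0$ and the ratio limit is genuinely informative. The latter should follow by the standard argument that \eqref{A1} combined with the continuity and strict monotonicity of $\theta\mapsto\rho(\mu^*(\theta))$ yields some $\varepsilon>0$ with $\mu_{ij}^*(\alpha+\varepsilon)<\infty$, providing a slightly stronger exponential moment than the Malthusian one, which is more than sufficient to force the $x\log x$ condition on the reproduction measures.
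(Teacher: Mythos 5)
Your proposal is correct in substance and follows the same basic strategy as the paper: embed the tree as a multi-type general branching process, use the type-indicator characteristics $\phi^{(i)}_j(s)=\mathbf 1\{j=i\}$ and $\psi\equiv 1$, and invoke the multi-type Nerman-type limit theory, with \eqref{A1} supplying the Malthusian parameter and the Perron--Frobenius eigenvectors. The one genuine difference is the final step. The paper applies the ratio theorem (Theorem \ref{multitypethm}, i.e.\ the quoted Theorem 2.7 of the multi-type extension) directly to $Z^{\phi}(t)/Z^{\psi}(t)$, which gives almost sure convergence of the ratio on $\{Z(t)\to\infty\}$ under (C1)--(C6) alone; it therefore never needs to discuss positivity of a martingale limit. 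You instead prove two separate strong laws $e^{-\alpha t}Z_i(t)\to Yu_i$ and divide, which forces you to establish $Y>0$ on survival, i.e.\ an $x\log x$-type condition --- an extra ingredient the paper's route avoids, though in this model (no deaths, strictly positive rates, non-explosion under \eqref{A1}) it is available. Two small corrections to your sketch: the useful moment condition is finiteness of $\mu^*_{ij}$ at some $\theta<\alpha$ (equivalently $\mu^*_{ij}(\alpha-\varepsilon)<\infty$), which is exactly what \eqref{A1} provides since $\theta_0<\alpha$; finiteness at $\alpha+\varepsilon$ is automatic and carries no information, so your ``$\mu_{ij}^*(\alpha+\varepsilon)<\infty$'' should read with a minus sign. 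Also, beyond direct Riemann integrability and the moment condition, one still has to check the structural hypotheses (the analogues of (C1)--(C5): non-lattice measures, supercriticality, finiteness and positivity/irreducibility of $M(\alpha)$, and the first-moment condition $\int_0^\infty u e^{-\alpha u}\mu_{ij}(du)<\infty$); the paper spends most of its proof deducing these from \eqref{A1}, and your write-up should do the same rather than treat them as given.
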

The next theorem asserts that the empirical degree distribution converges almost surely and identifies the limit.
\begin{theorem}
	\label{thm2}
	For the multi-type degree-based attachment model starting with one vertex of any type, with weight functions $\{ w_{ij}(\vec{n}) \}$ satisfying condition \eqref{A1}, we have that
	\begin{align*}
	\frac{ Z_i^{\vec{n} }(t)}{Z(t)} \to \alpha \frac{  u_i }{u_1+\dots+u_p } I_i(\vec{n}) \text{ almost surely as } t \to \infty
	\end{align*}
	where $I_i(\vec{n})$ satisfies the recursion 
	\begin{enumerate}[(i)]
		\item $I_i(\vec{0})  = \frac{1}{\alpha+w_{i1}(\vec{0}) +\dots+w_{ip}(\vec{0}) }$;\\
		\item if $|\vec{n}|>0$ then $ I_i(\vec{n})= \sum_{j=1}^{p} \frac{  w_{ij}(n_1,\ldots,n_j-1,\ldots,n_p)I_i(n_1,\ldots,n_j-1,\ldots,n_p) }{\alpha+w_{i1}(\vec{n}) +\dots+w_{ip}(\vec{n}) }$
		
	\end{enumerate}	
	with $w_{ij}(\vec{n}) = I_i(\vec{n})=0$ if $\min(n_1,\ldots, n_p)<0,$ $j=1,\ldots,p$.
\end{theorem}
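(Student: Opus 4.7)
The plan is to realise both $Z_i^{\vec n}(t)$ and $Z(t)$ as characteristic counts of the underlying multi-type general branching process, and then invoke the multi-type strong law of \cite{NermanPHD,multiypeextension}. Concretely, for a type-indexed family $\phi=(\phi^{(1)},\dots,\phi^{(p)})$ of bounded random characteristics, define
\begin{equation*}
Z^{\phi}(t) \;=\; \sum_{x}\phi^{(\mathrm{type}(x))}_{x}(t-\tau_x),
\end{equation*}
where the sum is over individuals $x$ born up to time $t$ and $\tau_x$ is the birth time of $x$. Under condition \eqref{A1} the relevant multi-type convergence theorem gives, almost surely,
\begin{equation*}
e^{-\alpha t}Z^{\phi}(t) \;\longrightarrow\; C\cdot\sum_{j=1}^{p} u_j\,\mathbb{E}\!\left[\int_{0}^{\infty}e^{-\alpha s}\phi^{(j)}(s)\,ds\right],
\end{equation*}
for a positive random variable $C$ that depends on the ancestor and the process history but not on $\phi$. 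Since individuals never die in this model, extinction has probability zero, so the limit is nontrivial on the whole sample space.

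Given this machinery, I would pick two characteristics. For the denominator, take $\phi_0^{(j)}\equiv 1$, which gives $Z^{\phi_0}(t)=Z(t)$ and $\mathbb{E}[\int_0^{\infty}e^{-\alpha s}ds]=1/\alpha$, hence $e^{-\alpha t}Z(t)\to C\alpha^{-1}\sum_j u_j$. For the numerator, take $\phi^{(j)}(s)=\mathbf{1}(j=i)\,\mathbf{1}(\vec{\xi}_{j}(s)=\vec n)$, so $Z^{\phi}(t)=Z_i^{\vec n}(t)$; only the $j=i$ summand contributes and we obtain $e^{-\alpha t}Z_i^{\vec n}(t)\to C u_i I_i(\vec n)$, where
\begin{equation*}
I_i(\vec n) \;:=\; \mathbb{E}\!\left[\int_{0}^{\infty}e^{-\alpha s}\mathbf{1}(\vec{\xi}_i(s)=\vec n)\,ds\right].
\end{equation*}
Dividing, the random prefactor $C$ cancels and yields the claimed limit $\alpha u_i I_i(\vec n)/\sum_j u_j$.

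It then remains to identify $I_i(\vec n)$ with the recursively defined quantity in the statement. The key observation is that the Markov chain $\vec{\xi}_i$ is coordinatewise non-decreasing, so each state $\vec m\in\mathbb{N}^p$ is visited at most once and, once hit, is occupied for an independent $\mathrm{Exp}(w_i(\vec m))$ sojourn. For $\vec n=\vec 0$ the chain starts in $\vec 0$ and a direct computation of the Laplace integral over the first sojourn gives $I_i(\vec 0)=(\alpha+w_i(\vec 0))^{-1}$. For $|\vec n|>0$ I would write
\begin{equation*}
I_i(\vec n)=\frac{L_i(\vec n)}{\alpha+w_i(\vec n)},\qquad L_i(\vec n)=\mathbb{E}\!\left[e^{-\alpha T(\vec n)}\mathbf{1}(T(\vec n)<\infty)\right],
\end{equation*}
decompose on the unique predecessor $\vec n - e_j$ from which the chain enters $\vec n$, and use the strong Markov property to get
\begin{equation*}
L_i(\vec n)=\sum_{j=1}^{p}L_i(\vec n - e_j)\cdot\frac{w_{ij}(\vec n - e_j)}{\alpha+w_i(\vec n - e_j)}.
\end{equation*}
Re-expressing $L_i(\vec n - e_j)=(\alpha+w_i(\vec n - e_j))I_i(\vec n - e_j)$ and dividing by $\alpha+w_i(\vec n)$ produces exactly recursion (ii), with the convention that $I_i(\vec m)=0$ off the first orthant.

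The main obstacle is verifying the hypotheses of the multi-type strong law. The $x\log x$-type integrability conditions should be automatic since both chosen characteristics are bounded by $1$; the spectral hypotheses follow from \eqref{A1} and Perron–Frobenius; the real bookkeeping is to match the notation of \cite{multiypeextension} carefully enough to confirm that the limits of $e^{-\alpha t}Z^{\phi}(t)$ and $e^{-\alpha t}Z^{\phi_0}(t)$ carry the \emph{same} random prefactor $C$, so that it cancels in the ratio. Once that is in hand, everything else is a computation.
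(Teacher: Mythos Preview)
Your overall strategy matches the paper's: count both $Z_i^{\vec n}(t)$ and $Z(t)$ via bounded random characteristics and invoke the multi-type branching limit theorem. Two points of comparison are worth recording.

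First, you apply the absolute strong law $e^{-\alpha t}Z^{\phi}(t)\to C\cdot\sum_j u_j\int_0^\infty e^{-\alpha s}\mathbb{E}[\phi^{(j)}(s)]\,ds$ separately to numerator and denominator and then divide, which forces you to argue that the random prefactor $C$ is the same in both limits. The paper sidesteps this entirely by citing the \emph{ratio} version (Theorem~2.7 of \cite{multiypeextension}, stated in the paper as Theorem~\ref{multitypethm}), which gives $Z^{\phi}(t)/Z^{\psi}(t)\to\bigl(\sum_j u_j\int_0^\infty e^{-\alpha s}\mathbb{E}[\phi_j(s)]\,ds\bigr)\big/\bigl(\sum_j u_j\int_0^\infty e^{-\alpha s}\mathbb{E}[\psi_j(s)]\,ds\bigr)$ directly on $\{Z(t)\to\infty\}$. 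This removes your ``real bookkeeping'' concern; the prefactor never appears. The verification that \eqref{A1} implies the hypotheses (C1)--(C6) is carried out once, in the proof of Theorem~\ref{thm1}, and simply reused.

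Second, your derivation of the recursion for $I_i(\vec n)$ is genuinely different from the paper's. The paper (Lemma~\ref{I-prop}) writes down the Kolmogorov forward equation for $\mathbb{P}(\vec\xi_i(t)=\vec n)$, multiplies by $e^{-\alpha t}$, integrates by parts, and solves for $I_i(\vec n)$. You instead exploit the monotone structure of the chain: express $I_i(\vec n)=L_i(\vec n)/(\alpha+w_i(\vec n))$ via the Laplace transform $L_i(\vec n)$ of the hitting time of $\vec n$, then decompose $L_i(\vec n)$ over the predecessor state using the strong Markov property. Both arguments are short and correct; yours is slightly more probabilistic in flavour and makes the role of the unique-visit property explicit, while the paper's is purely analytic and would extend verbatim to chains without that monotonicity.
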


\subsection{Results on Multi-type Linear Preferential Attachment}
A particularly interesting choice of weight functions is $w_{ij}(\vec{n}) = \gamma_{ij} (n_1+\dots+n_p)+\beta_{ij}$ where $\gamma_{ij}$ and $\beta_{ij}$ are positive constants. This is an extension of one-type linear preferential attachment.
We call this model multi-type linear preferential attachment based on total in-degree. As with its one-type counterpart the linear multi-type model exhibits a power law degree distribution as soon as $\gamma_{i1}+\dots+\gamma_{ip}>0 $.

\begin{theorem}
	\label{thm3}
	For the multi-type preferential attachment model starting with one vertex of any type, with weight functions $w_{ij}(\vec{n}) = \gamma_{ij} (n_1+\dots+n_p)+\beta_{ij}$ where $\gamma_{ij} \geq 0$ and $ \beta_{ij} > 0$, 
	$
	p_i(k) = \lim\limits_{t\to \infty} \frac{Z_i^{k}(t)}{Z(t)}
	$
	exists almost surely. Furthermore,
	\begin{align*}
	p_i(k) \sim  \begin{cases}
	C_1 \cdot k^{-(1+\frac{\alpha}{\gamma_{i1}+\dots+\gamma_{ip}})}\qquad &\text{if } \gamma_{i1}+\dots+\gamma_{ip}>0,\\
	C_2 \cdot e^{-k\log(1+\frac{\alpha}{\beta_{i1}+\dots+\beta_{ip}})} &\text{if } \gamma_{i1}+\dots+\gamma_{ip} = 0.
	\end{cases}
	\end{align*}
\end{theorem}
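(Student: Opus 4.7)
The plan is to deduce Theorem 3 from Theorem 2 by exploiting that the linear rate functions $w_{ij}(\vec{n}) = \gamma_{ij}|\vec{n}| + \beta_{ij}$ depend on $\vec{n}$ only through the total degree $k = |\vec{n}|$. Write $\gamma_i := \gamma_{i1} + \dots + \gamma_{ip}$ and $\beta_i := \beta_{i1} + \dots + \beta_{ip}$. Since only finitely many $\vec{n} \in \mathbb{N}^p$ satisfy $|\vec{n}| = k$, one may sum the almost-sure limit in Theorem 2 termwise:
\begin{align*}
\frac{Z_i^{k}(t)}{Z(t)} = \sum_{|\vec{n}|=k} \frac{Z_i^{\vec{n}}(t)}{Z(t)} \longrightarrow \frac{\alpha\, u_i}{u_1+\dots+u_p}\, J_i(k) \text{ a.s.,} \qquad J_i(k) := \sum_{|\vec{n}|=k} I_i(\vec{n}).
\end{align*}
The problem then reduces to computing $J_i(k)$ and extracting its large-$k$ asymptotics.

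The central step is a scalar recursion for $J_i(k)$. Multiplying the recursion in Theorem 2 by $\alpha + w_{i1}(\vec{n}) + \dots + w_{ip}(\vec{n}) = \alpha + \gamma_i k + \beta_i$ (where the last equality uses $|\vec{n}|=k$) and summing over $|\vec{n}|=k$, then reindexing the right-hand side by $\vec{m} := \vec{n} - e_j$ (terms with $n_j=0$ vanish because $I_i$ is zero on negative coordinates by convention), gives
\begin{align*}
(\alpha + \gamma_i k + \beta_i)\, J_i(k) = \sum_{j=1}^p \sum_{|\vec{m}|=k-1} w_{ij}(\vec{m})\, I_i(\vec{m}) = \bigl(\gamma_i(k-1) + \beta_i\bigr)\, J_i(k-1),
\end{align*}
where the second equality uses $\sum_j w_{ij}(\vec{m}) = \gamma_i(k-1) + \beta_i$ whenever $|\vec{m}|=k-1$. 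Together with $J_i(0) = I_i(\vec{0}) = 1/(\alpha+\beta_i)$, this unrolls to a closed-form product. The crucial feature is that $\sum_j w_{ij}(\vec{m})$ depends on $\vec{m}$ only through $|\vec{m}|$, which is precisely what linearity in the total degree buys.

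Unrolling yields $J_i(k) = (\alpha+\beta_i)^{-1} \prod_{l=1}^k \bigl(\gamma_i(l-1)+\beta_i\bigr)/\bigl(\alpha+\gamma_i l + \beta_i\bigr)$. When $\gamma_i > 0$ I would divide numerator and denominator of each factor by $\gamma_i$ and rewrite the product as a ratio of Gamma functions,
\begin{align*}
J_i(k) = \frac{1}{\alpha+\beta_i}\cdot\frac{\Gamma(1+(\alpha+\beta_i)/\gamma_i)}{\Gamma(\beta_i/\gamma_i)}\cdot\frac{\Gamma(k+\beta_i/\gamma_i)}{\Gamma(k+1+(\alpha+\beta_i)/\gamma_i)},
\end{align*}
from which Stirling's formula gives $J_i(k) \sim C_1\, k^{-1-\alpha/\gamma_i}$. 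When $\gamma_i = 0$, every factor equals $\beta_i/(\alpha+\beta_i)$ and $J_i(k) = (\alpha+\beta_i)^{-1} \exp\bigl(-k\log(1+\alpha/\beta_i)\bigr)$, which is the second asymptotic. The principal source of technicality is the preliminary verification that \eqref{A1} holds for these weight functions so that Theorem 2 applies; this requires a closed-form computation of $\mu^{*}(\theta)$, available because $|\vec{\xi}_i(t)|$ is an immigration-Yule process of rate $\gamma_i k + \beta_i$, together with the observation that $\rho(\mu^{*}(\theta))$ diverges as $\theta$ decreases toward $\max_i \gamma_i$ (or toward $0$ if all $\gamma_i=0$), so continuity of the Perron-Frobenius root produces a suitable $\theta_0$.
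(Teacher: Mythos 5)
Your proposal is correct and follows essentially the same route as the paper: verify \eqref{A1} via the explicit Laplace transform of the linear-rate reproduction measure, reduce to $J_i(k)=\sum_{|\vec{n}|=k}I_i(\vec{n})$, derive the scalar recursion $(\alpha+\gamma_i k+\beta_i)J_i(k)=(\gamma_i(k-1)+\beta_i)J_i(k-1)$ (this is exactly the paper's Lemma \ref{I-k}, proved there by the same reindexing of the recursion of Lemma \ref{I-prop}), and extract the asymptotics via Gamma functions and Stirling, with the geometric case when $\gamma_i=0$. The only cosmetic difference is that you sum the almost-sure limits of Theorem \ref{thm2} over the finitely many $\vec{n}$ with $|\vec{n}|=k$, whereas the paper applies Theorem \ref{multitypethm} directly to the aggregated characteristic; these are equivalent.
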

By summing over all type  $i$ vertices it is easy to see that the total asymptotic proportion of vertices with degree $k$ also follows a power law distribution, i.e.
\begin{align*}
p(k) = \lim\limits_{t\to \infty}	\frac{Z^{k}(t)}{Z(t)} = \sum_{i=1}^{p}\lim\limits_{t\to \infty} \frac{Z_i^{k}(t)}{Z(t)}  \sim  C \cdot k^{-(1+\frac{\alpha}{ \max_i \{ \gamma_{i1}+\dots+\gamma_{ip}  \} })}.
\end{align*}

The rest of the paper is organized as follows. In the next section we introduce the theory for multi-type branching processes needed to prove Theorem \ref{thm1}, \ref{thm2}, and \ref{thm3}. In Section 3 we prove Theorem \ref{thm1} and \ref{thm2}, and in Section 4 we prove Theorem \ref{thm3}. Finally, in Section 5, we investigate the results numerically.

\section{General Multi-type Branching Processes}
General branching processes has been extensively studied, and it is not our intention to summarize the results here, for this see e.g. the book by Jagers \cite{Jagers} for the single-type case and \cite{multiypeextension, NermanPHD} for multi-type generalizations. We shall, however, explain some of the concepts and results needed for proving Theorems \ref{thm1}, \ref{thm2}, and \ref{thm3}. When defining general multi-type branching processes we will follow the terminology of \cite{NermanPHD}, and in applying the theory we will mainly use the results of \cite{multiypeextension}. The main result needed from \cite{multiypeextension} is stated below as Theorem \ref{multitypethm}.

A $p$-type general branching process is a process where individuals can be of one of $p$ different types, and $i$-type individuals live for a random time $\lambda_i\in [0, \infty]$ during which they give birth to $j$-type individuals according to the points of a point process $\xi_{ij}$ defined on $\mathbb{R}_+$. Individuals live and reproduce independently of each other, but there is no restriction on the dependence between an individual's life time and reproduction process. Individuals of the same type have the same reproduction and life-time law.

We denote individuals by $x$ and their type by $\tau (x) \in \{1,2,\ldots, p \}$. If $x=(0,\tau_0; i_1,\tau_1; \ldots;i_n,\tau_n)$ then $x$ is the $i_n$:th child of type $\tau_n$ of $\ldots$ of the $i_1$:th child of type $\tau_1$ of the ancestor $0$, which is of type $\tau_0$. The space of possible individuals is denoted $\mathcal{J}$ and is defined by $ \mathcal{J} = \{ (0,\tau_0;i_1,\tau_1;\ldots;i_k,\tau_k);\ k\geq 0, i_j\in \{ 1,\ldots, p\},\ j \in \{1,\ldots, k \}   \}$
To each individual $x$ we assume there is a probability space $(\Omega_x, \mathbb{B}_x, \mathbb{P}_x)$ associated, on which $x$'s life-length $\lambda_x$, a characteristic $\phi_x$ (defined below and more stringent in \cite{multiypeextension}), and $x$'s reproduction $\xi_x = (\xi_x^1,\ldots,\xi_x^p)$ are defined. A characteristic is a product-measurable, separable (random) process $\phi: \Omega_x \times\mathbb{R} \to \mathbb{R}$ with $\phi(\omega, t) = 0$ if $t<0$: Let $\phi_x(t) = \phi(\omega_{\downarrow x}, t)$ where $\omega_{\downarrow x}$ is the outcome of the branching process starting with individual $x$ as ancestor. Hence, $\phi_x(t)$ is the score given to the individual $x$ of age $t$. Note that $\phi_x(t)$ is allowed to depend on $x$ and its whole progeny, a fact which is a major strength of random characteristics. However, we shall only use characteristics that depends only on the life history of $x$, not its entire progeny.

We can now define the $p$-type process with ancestor $x_0$ on the probability space 
\begin{align*}
(\Omega, \mathbb{B}, \mathbb{P}) = \prod_{x \in \mathcal{J}} (\Omega_x, \mathbb{B}_x, \mathbb{P}_x)
\end{align*}
through the birth times $\{\sigma_x\}$ defined by induction
\begin{align*}
&\sigma_{x_0} = 0, \text{ and if } x=(x';j_k,\tau_k)\\
&\sigma_x = \sigma_{x'}+\inf\{ t\geq 0,\  \xi_{x'}^{\tau_k}([0,t]) \geq j_k \}.
\end{align*}
We note that an individual $x$ who is never born will have $\sigma_x = \infty$. Now, let $Z^{\phi}(t)$ denote the total score of the population at time $t$, that is, 
\begin{align*}
Z^{\phi}(t) = \sum_{x\in \mathcal{J} } \phi_x (t-\sigma_x).
\end{align*}
We call $\{Z^{\phi}(t),\ t\geq 0 \}$ the general multi-type $\phi$-counted branching process.
When necessary we use the notation ${}_{i}Z^{\phi}(t)$ to emphasize that the process starts with one type $i$ individual. Different choices of $\phi$ give rise to different processes, e.g. if $\phi_x(t) = 1, t\geq 0$ then $Z^{\phi}(t)$ represents the number of individuals that have been born up to time $t$; and if $\phi_x(t) = 1\{ 0\leq t \leq \lambda_x \}$ then $Z^{\phi}(t)$ represents the number of individuals alive at time $t$. 
\begin{remark}
	Note that the process always starts with a single individual $x_0$ of type $\tau_0$ and that we omit this in the notation to simplify expressions.
\end{remark}

For all Borel measurable sets $A$ in $\mathbb{R}_+$ let $\mu_{ij}(A) = \mathbb{E}(\xi_x^j(A))$ if $x$ is of type $i$. It follows that $\mu_{ij}$ is a measure, see \cite[Lemma 1.1.1 ]{Point}. For each $\theta>0$, define the new measure $\mu_{ij}^{\theta}(A)=\mu_{ij}(A, \theta)$, $A\in \mathcal{B}(\mathbb{R}_+)$, through 
\begin{align*}
\mu_{ij}(A, \theta) = \int_{A} e^{-\theta s} \mu_{ij}(ds),\qquad \theta >0.
\end{align*}
Also, define
\begin{align*}
M(\theta) = [	\mu_{ij}([0,\infty], \theta)]_{i,j}.
\end{align*}
Following \cite{multiypeextension} we shall, for technical reasons, assume throughout this section that:

\begin{enumerate}[(C1)]
	\item For $i,j=1,2,\ldots, p$, the measure $\mu_{ij}$ is non-lattice, i.e. not concentrated on any set $\{b+\lambda \cdot \mathbb{Z}  ,\ \lambda \in \mathbb{R} \}$
	\item Either $M(0)$ has at least one infinite entry, or only finite entries and Perron root $\rho > 1$. Also $M(0)^n$ is assumed to have all positive entries (possibly infinite) for some $n\geq 1$.
	\item There exists an $\alpha > 0$ such that $M(\alpha)$ has only finite entries and Perron root $\rho = 1$, with corresponding left and right positive eigenvalues $u$ and $v$ normed such that $uv^T = \textbf{1}v^t=1.$
	\item For $i,j=1,2,\ldots,p$ we have that $\int_{0}^{\infty}ue^{-\alpha u}\mu_{ij}(du)<\infty$.
\end{enumerate}
The following assumption will only be in force when explicitly stated:
\begin{enumerate}[(C5)]
	\item There is some $\theta \in (0, \alpha)$ such that $M(\theta)$ has finite entries only. 
\end{enumerate}
\begin{remark}
	In the proof of Theorem \ref{thm1} we shall see that, for multi-type degree-based attachment, condition \eqref{A1} implies conditions (C2)-(C5) ((C1) follows from the model definition).
\end{remark}

We shall mainly be interested in results regarding the ratio of the process counted in different ways, i.e. $\lim\limits_{t\to \infty} \frac{Z^{\phi}(t)}{Z^{\psi}(t)}$.  However, one needs to put some restrictions on the random characteristics. Let $\phi$ be a random characteristic not $0$ a.e. and with paths in the Skorohod space $D(\mathbb{R})$ of right-continuous functions with finite left limits. Also assume that there exists a $\theta < \alpha$ such that, for $i=1,2,\ldots, p$,
\begin{equation}
\label{C6}
\mathbb{E}(\sup_{t\geq 0} e^{-\theta t}\phi_i(t) ) <\infty.  \tag{C6}
\end{equation}
\begin{remark}
	In the degree-based attachment setting we will work with bounded random characteristics and hence \eqref{C6} is trivially satisfied. 
\end{remark}
Finally, we can quote the result we need.
\begin{theorem}{\cite[Theorem 2.7]{multiypeextension}}
	\label{multitypethm}
	Assume that $\{Z(t)\}$ is a branching process with intensity measures $\{ \mu_{ij}  \}$ satisfying conditions (C1)-(C5). Furthermore assume that $\phi$ and $\psi$ are both random characteristics satisfying condition (C6). Then on the event that $\{Z(t)\to \infty \}$
	\begin{align*}
	\frac{Z^{\phi}(t)}{Z^{\psi}(t)} \to \frac{  \sum_{j=1}^{p} u_j \int_{0}^{\infty} \mathbb{E}(e^{-\alpha s} \phi_j(s))ds     }{ \sum_{j=1}^{p} u_j \int_{0}^{\infty} \mathbb{E}(e^{-\alpha s} \psi_j(s))ds    } \text{ a.s. as } t \to \infty.
	\end{align*}
\end{theorem}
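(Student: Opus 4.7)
The cleanest route is to first establish the stronger single-characteristic asymptotic
\begin{align*}
e^{-\alpha t}\, Z^{\phi}(t) \longrightarrow W \cdot \sum_{j=1}^{p} u_j \int_{0}^{\infty} \mathbb{E}\bigl(e^{-\alpha s}\phi_j(s)\bigr)\,ds \qquad \text{a.s. on }\{Z(t)\to\infty\},
\end{align*}
where $W$ is a strictly positive random variable depending on the realized branching process and the type of the ancestor, but \emph{not} on the characteristic $\phi$. Once this is available the theorem is immediate: apply it to $\phi$ and to $\psi$, divide, and observe that the common factor $W$ cancels and that $W>0$ on $\{Z(t)\to\infty\}$ keeps the ratio well-defined in the limit.

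The central device is a multi-type Nerman martingale. Let $\mathcal{F}_t$ be the $\sigma$-field generated by births and lives up to time $t$, and let $G(t)=\{x:\sigma_{m(x)}\le t<\sigma_x\}$ denote the ``coming generation'' at time $t$, where $m(x)$ is the mother of $x$. Set
\begin{align*}
W_t \;=\; \sum_{x\in G(t)} e^{-\alpha \sigma_x}\, v_{\tau(x)}.
\end{align*}
The defining eigenvector identity $M(\alpha)v=v$ from (C3) makes $\{W_t,\mathcal{F}_t\}$ a non-negative martingale with mean $v_{\tau_0}$, so ordinary martingale convergence produces an almost-sure limit $W$. To upgrade to $L^1$-convergence, and in particular to get $W>0$ on $\{Z(t)\to\infty\}$, one uses the $x\log x$-type moment bound (C4) and the off-Malthusian uniform integrability (C5), via a truncation argument of the kind introduced by Nerman.

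To transfer $W_t\to W$ into convergence of $Z^\phi(t)$, write
\begin{align*}
e^{-\alpha t}Z^{\phi}(t) \;=\; \sum_{x\in \mathcal{J}} e^{-\alpha\sigma_x}\cdot e^{-\alpha(t-\sigma_x)}\phi_x(t-\sigma_x)
\end{align*}
and group the terms by the most recent ancestor sitting in a coming generation. Condition (C6) controls the inner factor $e^{-\alpha(t-\sigma_x)}\phi_x(t-\sigma_x)$ uniformly in $x$ and $t$, while the non-lattice assumption (C1) allows one to invoke a multi-type key renewal theorem; the Perron--Frobenius left eigenvector $u$ then aggregates the resulting type-$j$ contributions into $\sum_{j} u_j \int_0^\infty \mathbb{E}(e^{-\alpha s}\phi_j(s))ds$, multiplied by $W$.

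The main obstacle is precisely this third step: the uniform-in-$t$ renewal-theoretic control and the justification of exchanging the almost-sure limit with an infinite sum over the genealogy. In the single-type setting Nerman handled this by truncating the characteristic and estimating a variance; the multi-type extension must carry out the same program while tracking $p$ eigenvector-weighted quantities simultaneously, which is the technical heart of the argument in \cite{multiypeextension}. The martingale construction and the final cancellation step are largely formal once this renewal step is in place.
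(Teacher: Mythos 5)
There is nothing to compare against inside the paper: Theorem~\ref{multitypethm} is not proved there at all, it is quoted verbatim from \cite[Theorem 2.7]{multiypeextension} and used as an imported tool. So your proposal should be judged as a standalone reconstruction of the cited result, and as such it is an accurate outline of the standard Nerman-type program (coming-generation martingale $W_t=\sum_{x\in G(t)}e^{-\alpha\sigma_x}v_{\tau(x)}$ weighted by the right eigenvector, a key-renewal/characteristic step aggregated by the left eigenvector $u$, then division so that the limit variable $W$ cancels), but it is not a proof. The steps you yourself flag as ``the technical heart'' --- the uniform-in-$t$ renewal-theoretic control, the interchange of the almost-sure limit with the infinite sum over the genealogy, and the identification of the limit of $e^{-\alpha t}Z^{\phi}(t)$ as $W$ times a constant that does not depend on $\phi$ --- are exactly the content of the cited theorem, and you defer them back to \cite{multiypeextension} rather than supplying an argument. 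In addition, the division step silently requires $W>0$ almost surely on $\{Z(t)\to\infty\}$ (otherwise the two strong limits give only $0/0$ there), and your justification of this is off: condition (C4), $\int_0^\infty u e^{-\alpha u}\mu_{ij}(du)<\infty$, is a first-moment condition on the intensity measure, not an ``$x\log x$-type'' condition on the random variable $\int e^{-\alpha s}\xi^j(ds)$, so uniform integrability of the martingale and positivity of $W$ on survival do not follow from (C4)--(C5) in the direct way you assert; this is a genuine gap in the route via two single-characteristic limit theorems (indeed, the literature proves the ratio statement on the survival set with additional care precisely because of this issue). In short: the strategy matches the one used in the cited source, but the proposal establishes none of the difficult steps, so as a proof it is incomplete.
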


\section{General Degree-Based Attachment Trees}
\label{General Framework}
In this section we will provide a general framework for deriving asymptotic ratio results on the multi-type degree-based attachment tree as defined in Section \ref{modeldef}. It is clear from the model definition that the vertex population evolves as a general multi-type branching process. Recall that, in addition to being a non-lattice process (which follows from model definition), the model is assumed to satisfy \eqref{A1} throughout.

Clearly of much importance is the matrix $\mu^*(\theta)$ as it is part of the condition \eqref{A1}. Hence, we need a way of calculating the integrals $\mu_{ij}^*(\theta)$ as defined in Section \ref{notation}.
One way of doing this is to calculate the Radon-Nikodym derivate of $\mu_{ij}$ with respect to the Lebesgue measure. An application of the fundamental theorem of calculus together with the Markov property of the model gives that $ \frac{d\mu_{ij}(t)}{dt} = \mathbb{E}(w_{ij}(\vec{\xi_i}(t) ))$, $ i,j=1,2,\ldots,p$ if the process $\xi_i(t)$ does not explode ((A1) ensures this is the case).
Hence, we can calculate $\mu^*_{ij}( \theta)$ through
\begin{align*}
\mu^*_{ij}(\theta) = \int_{0}^{\infty}e^{-\theta s} \mathbb{E}(w_{ij}(\vec{\xi}_i(s)))ds.
\end{align*}

\subsection{A Useful Integral}
In what follows we shall see that the integral
\begin{align*}
&I_i(\vec{n}, \theta) = \int_{0}^{\infty} e^{-\theta s} \mathbb{P}( \vec{\xi_i}(t)  = \vec{n} )ds,\qquad \vec{n}\in \mathbb{N}^p\\
\end{align*}
is very useful in deriving Theorem \ref{thm1}, \ref{thm2}, and \ref{thm3}. This is because many quantities of interest can be written as a sum of $I_i(\vec{n}, \theta)$ over some set. For instance,
\begin{align*}
\mu_{ij}^*(\theta) = \sum_{\vec{n} \in \mathbb{N}^p}w_{ij}(\vec{n})I_i(\vec{n},\theta).
\end{align*}
We shall therefore spend some time investigating $I_i(\vec{n}) \equiv I_i(\vec{n}, \alpha)$, and deriving a recursion for it. 
\begin{lemma}
	\label{I-prop}
	Let $\vec{\xi_i}(t) = (\xi_{i1}(t),\dots,\xi_{ip}(t))$ where $\xi_{ij}(t)$ are the counting processes of a multi-type degree-based attachment model, where $\xi_{ij}(t)$ does not explode in finite time. For $\vec{n} =(n_1,\ldots, n_p) \in \mathbb{N}^p$ define 
	\begin{align*}
	I_i(\vec{n}) = \int_{0}^{\infty} e^{-\alpha s} \mathbb{P}( \vec{\xi_i}(t) =\vec{n} )ds.
	\end{align*}
	Then
	\begin{enumerate}[(i)]
		\item $I_i(\vec{0})  = \frac{1}{\alpha+w_{i1}(\vec{0}) +\dots+w_{ip}(\vec{0}) }$;\\
		\item if $|\vec{n}|>0$ then $ I_i(\vec{n})= \sum_{j=1}^{p} \frac{  w_{ij}(n_1,\ldots,n_j-1,\ldots,n_p)I_i(n_1,\ldots,n_j-1,\ldots,n_p) }{\alpha+w_{i1}(\vec{n}) +\dots+w_{ip}(\vec{n}) }.$
	\end{enumerate}	
	where $w_{i1}(\vec{n}) = w_{ip}(\vec{n}) = I_i(\vec{n})=0$ if $\min(n_1,\ldots, n_p)<0$, $i,j=1,2,\ldots,p.$
	
\end{lemma}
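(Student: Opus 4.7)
The plan is to view $\vec{\xi_i}(t)$ as the continuous-time Markov chain on $\mathbb{N}^p$ described in the model definition, starting at $\vec{0}$, with transition rate $w_{ij}(\vec{n})$ from $\vec{n}$ to $\vec{n}+e_j$ (where $e_j$ is the $j$-th standard basis vector), and to exploit the Kolmogorov forward equation for the transition probabilities $p_{\vec{n}}(s) = \mathbb{P}(\vec{\xi_i}(s)=\vec{n})$. I would then recover the recursion for $I_i(\vec{n})$ by taking the Laplace transform at $\theta=\alpha$, using integration by parts to convert the time derivative into an $\alpha$-factor and a boundary term.

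Concretely, the forward equation reads
\begin{equation*}
\frac{d}{ds} p_{\vec{n}}(s) \;=\; \sum_{j=1}^{p} w_{ij}(\vec{n}-e_j)\,p_{\vec{n}-e_j}(s) \;-\; w_i(\vec{n})\,p_{\vec{n}}(s),
\end{equation*}
with initial condition $p_{\vec{n}}(0)=\mathbf{1}\{\vec{n}=\vec{0}\}$ and the usual convention that terms with a negative coordinate vanish. Multiplying by $e^{-\alpha s}$ and integrating over $[0,\infty)$, the left-hand side becomes, by integration by parts,
\begin{equation*}
\bigl[e^{-\alpha s}p_{\vec{n}}(s)\bigr]_0^{\infty} + \alpha \int_0^{\infty} e^{-\alpha s} p_{\vec{n}}(s)\,ds \;=\; -\mathbf{1}\{\vec{n}=\vec{0}\} + \alpha\, I_i(\vec{n}),
\end{equation*}
where the boundary term at $\infty$ vanishes because $p_{\vec{n}}(s)\le 1$ and $\alpha>0$. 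Rearranging yields $(\alpha+w_i(\vec{n}))I_i(\vec{n}) = \mathbf{1}\{\vec{n}=\vec{0}\} + \sum_{j} w_{ij}(\vec{n}-e_j)I_i(\vec{n}-e_j)$, which is exactly (i) when $\vec{n}=\vec{0}$ and exactly (ii) when $|\vec{n}|>0$.

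Two small points require care. First, one must be allowed to integrate the forward equation term by term; this is justified because $\xi_i$ does not explode (so $\sum_{\vec{n}}p_{\vec{n}}(s)=1$) and all involved quantities are non-negative, so Tonelli applies, while the validity of the forward equation itself for this non-explosive pure-birth-type chain is standard. Second, one must check $e^{-\alpha s}p_{\vec{n}}(s)\to 0$ as $s\to\infty$, which is immediate from $\alpha>0$ and $p_{\vec{n}}\le 1$, and $I_i(\vec{n})<\infty$ for each $\vec{n}$, again for the same reason.

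An alternative, purely probabilistic derivation would condition on the first jump of $\vec{\xi_i}$: the holding time at $\vec{0}$ is $\mathrm{Exp}(w_i(\vec{0}))$, after which the chain moves to $e_j$ with probability $w_{ij}(\vec{0})/w_i(\vec{0})$, and the strong Markov property then relates $I_i(\vec{n})$ to $I_i(\vec{n}-e_j)$ through a convolution whose Laplace transform gives the same recursion. I expect the only mildly tricky step to be the bookkeeping of the boundary term in the integration-by-parts approach; everything else is essentially a direct calculation, so I would write the proof by applying the forward equation and integrating, as sketched above.
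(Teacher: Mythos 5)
Your proposal is correct and follows essentially the same route as the paper: the paper also writes down the Kolmogorov forward equation for $\mathbb{P}(\vec{\xi_i}(s)=\vec{n})$ (valid by non-explosion), multiplies by $e^{-\alpha s}$, integrates by parts to produce the $\alpha I_i(\vec{n})$ term plus the initial-condition contribution at $\vec{n}=\vec{0}$, and solves for $I_i(\vec{n})$. Your unified treatment via the indicator $\mathbf{1}\{\vec{n}=\vec{0}\}$ is just a compact way of writing the paper's two cases, so there is nothing further to add.
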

\begin{proof}
	By definition, $\vec{\xi_i}(t)$ is a Markov process on $\mathbb{N}^p$ and, by assumption, it does not explode in finite time. Hence, it satisfies the Kolmogorov forward equations. 
	
	Assuming that $w_{ij}(\vec{n}) = 0$ if $\min(n_1,\ldots, n_p)<0$, $i,j=1,2,\ldots,p.$, we get
	\begin{multline*}
	\frac{d}{dt} \mathbb{P}( \vec{\xi_i}(t) = \vec{n} ) = \sum_{j = 1}^{p}  w_{ij}(n_1,\ldots, n_j-1,\ldots, n_p) \mathbb{P}(\vec{\xi_i}(t) = (n_1,\ldots, n_j-1,\ldots, n_p))   \\
	- (w_{i1}( \vec{n})+\dots+w_{ip}(\vec{n}))\mathbb{P}(\vec{\xi_i}(t) = \vec{n} ).
	\end{multline*}
	
	Using this together with integration by parts yields
	\begin{multline*}
	I_i(\vec{0}) = \int_{0}^{\infty}e^{-\alpha s}\mathbb{P}( \vec{\xi_i}(s)=\vec{n}  )ds =\frac{1}{\alpha}+ \frac{1}{\alpha}\int_{0}^{\infty}e^{-\alpha s}	\frac{d}{ds} \mathbb{P}( \vec{\xi_i}(s) = \vec{n} )ds\\
	=\frac{1}{\alpha} -\frac{1}{\alpha} (w_{i1}(\vec{n})+\dots+w_{ip}(\vec{n}))I_i(\vec{n})
	\end{multline*}
	and for arbitrary $\vec{n}$ we get
	\begin{multline*}
	I_i(\vec{n}) = \int_{0}^{\infty}e^{-\alpha s}\mathbb{P}( \vec{\xi_i}(s)=\vec{n}  )ds = \frac{1}{\alpha}\int_{0}^{\infty}e^{-\alpha s}	\frac{d}{ds} \mathbb{P}( \vec{\xi_i}(s) = \vec{n} )ds\\
	=\frac{1}{\alpha} \sum_{j = 1}^{p}w_{ij}(n_1,\ldots, n_j-1,\ldots, n_p)I_i(n_1,\ldots, n_j-1,\ldots, n_p)  \\
	-\frac{1}{\alpha} (w_{i1}(\vec{n})+\dots+w_{ip}(\vec{n}))I_i(\vec{n}) .
	\end{multline*}
	Finally, solving for $I_i(\vec{n})$ gives
	\begin{align*}
	I_i(\vec{n})= \sum_{j=1}^{p} \frac{  w_{ij}(n_1,\ldots,n_j-1,\ldots,n_p)I_i(n_1,\ldots,n_j-1,\ldots,n_p) }{\alpha+w_{i1}(\vec{n}) +\dots+w_{ip}(\vec{n}) }.
	\end{align*}
\end{proof}

We can now prove Theorem \ref{thm1} and \ref{thm2}.
\begin{proof}[Proof of Theorem \ref{thm1}]
	Let $\phi_x(t) = 1\{ \tau(x)=i \} $ be the random characteristic assigning type $i$ vertices score $1$ and type $j \neq i$ vertices score $0$. The branching process $Z_i(t) = Z^{\phi}(t)$ starting with 1 vertex of \textit{any type} represents the number of type $i$ vertices at time $t$.
	
	Let $\psi_x(t) \equiv 1$ and put $Z(t) = Z^{\psi}(t)$---this is the original branching process counting the number of vertices alive at time $t$. 
	
	We want to apply Theorem \ref{multitypethm} to the ratio $\frac{Z_i(t)}{Z(t)}$ and need to check that conditions (C1)-(C6) are satisfied.
	The random characteristics trivially satisfy the condition (C6) of Theorem \ref{multitypethm} and, since $\mu_{ij}(t)$ are non-lattice measures by design, also condition (C1) is fulfilled. 
	
	It remains to prove that condition \eqref{A1} implies conditions (C2)-(C5). By \eqref{A1}, the Perron root exists for all $\lambda \geq \theta_0$. Since $\rho(\mu^*(\theta_0)   )>1$ for some $\theta_0 >0$ and as $\rho(\mu^*(\theta)   )$ is a decreasing function, $\rho(\mu^*(0)  )$ is larger than $1$ (or an entry is infinite) and condition (C2) is satisfied. Condition (C3) follows from (A1) since the Perron root is continuous and decreasing to 0 \cite[Lemma 9.1]{Mode}. Condition (C5) follows trivially from (A1).
	
	Left to show is that (C4) is satisfied, i.e. that
	\begin{align*}
	\int_{0}^{\infty} ue^{-\alpha u} \mu_{ij}(du) < \infty.
	\end{align*}
	It follows from \eqref{A1} that $\alpha > \theta_0$ and $\int_{0}^{\infty} e^{-\theta_0 u} \mu_{ij}(du) < \infty$. For large enough $u$, we have that $u e^{-\alpha u} < e^{-\theta_0 u}$ and therefore that $\int_{0}^{\infty} ue^{-\alpha u} \mu_{ij}(du)<\infty$.
	
	All conditions of Theorem \ref{multitypethm} are satisfied and applying it to $\lim\limits_{t \to \infty} \frac{Z_i(t)}{Z(t)}$ yields
	\begin{align*}
	\lim\limits_{t \to \infty} \frac{Z_i(t)}{Z(t)} = \frac{u_i}{u_1+\dots+u_p} 
	\end{align*}
\end{proof} 
The proof of  Theorem \ref{thm2} is similar.
\begin{proof}[Proof of Theorem \ref{thm2}]
	Again we wish to apply Theorem \ref{multitypethm}. Let $\phi_x(t) = 1\{ \tau(x)=i,\ \xi(t)=\vec{n} \} $ be the random characteristic assigning score 1 to type $i$ vertices with $n_k$ children of type $k$, $k=1,\ldots, p$. Let $Z_i^{\vec{n}}(t) = Z^{\phi}(t)$ be the branching process associated with this characteristic. Similarly let $\psi_x(t) =1\{t \geq 0 \}$ and $Z(t) = Z^{\psi}(t)$ be the branching process counting the number of vertices born/alive at time $t$. 
	
	The proof of Theorem \ref{thm1} shows that we can apply Theorem \ref{multitypethm} and we get
	\begin{align*}
	\lim\limits_{t \to \infty} \frac{Z_i^{\vec{n}}(t)}{Z(t)} = \frac{u_i\int_{0}^{\infty} e^{-\alpha s  } \mathbb{P}( \xi_i(s)=\vec{n})ds}{(u_1+\ldots+u_p)/\alpha}= \alpha\frac{ u_i }{u_1+\ldots+u_p} I_i(\vec{n}).
	\end{align*}
	The latter part of the theorem follows directly from Lemma \ref{I-prop}.
\end{proof} 

\section{Multi-type Linear Preferential Attachment Trees}
In this section the theory from Section \ref{General Framework} is applied to investigate the limiting behavior of the degree distribution, and asymptotic composition of the vertex population, for a specific family of weight functions $w_{ij}(\vec{n})$. We consider the case when the weight functions are given by $w_{ij}(\vec{n}) = \gamma_{ij} (n_1+\dots+n_p)+\beta_{ij}$, with $\gamma_{ij}$ and $\beta_{ij}$ being positive constants.
We call this model multi-type linear preferential attachment based on total in-degree. The main purpose of this section is to prove Theorem \ref{thm3}. 

First we will need to investigate if condition \eqref{A1} is satisfied. Hence, we will need to calculate $\mu_{ij}^{*}(\alpha)$. In order to do so we need the fact that $\frac{d\mu_{ij}(t)}{dt} =  \mathbb{E}( w_{ij}( \vec{\xi_i}(t)  )  )$ and Lemma 1, both of which assumes that $\vec{\xi_i}(t)$ does not explode. Since $\xi_i$ is Markov non-explosion follows the condition
\begin{align*}
	\sum_{\vec{n} \in \mathbb{N}^p } \frac{1}{w( \vec{n}  )} =	\sum_{\vec{n} \in \mathbb{N}^p }  \frac{1}{ \sum_{i,j} \gamma_{ij} (n_1+\dots+n_p)+\sum_{i,j}\beta_{ij}} = \infty.
\end{align*}

Now that we know $\frac{d\mu_{ij}(t)}{dt} =  \mathbb{E}( w_{ij}( \vec{\xi_i}(t)  )  )$, note that the density of $\mu_{ij}$ is given by
\begin{align*}
\frac{d\mu_{ij}(t)}{dt} =  \mathbb{E}( w_{ij}( \vec{\xi_i}(t)  )  ) = \mathbb{E}( w_{ij}( \xi_{i1}(t) + \dots+\xi_{ip}(t)  )  ) = \sum_{k=0}^{\infty} w_{ij}(k)\mathbb{P}( \xi_i^{\Sigma}(t)=k  )
\end{align*}
where $\xi^{\Sigma}_i(t) = \xi_{i1}(t) + \dots+\xi_{ip}(t) $.
In deriving an expression for $\mu_{ij}^{*}(\alpha)$ it will first be useful to study $I_i(k)=\sum_{n_1+\dots+n_p=k}I_i(\vec{n}) = \int_{0}^{\infty} e^{-\alpha s }\mathbb{P}(\xi^{\Sigma}_i(s)=k) ds$. 
\begin{lemma}
	\label{I-k}
	If the weight functions of a non-explosive multi-type preferential attachment tree satisfy $w_{ij}(\vec{n}) = w_{ij}(n_1+\dots + n_p)$ then
	\begin{align*}
	I_i(k)= \frac{1}{\alpha+w_{i1}(k)+\dots+w_{ip}(k)}\prod_{n=0}^{k-1}\frac{w_{i1}(n)+\dots+w_{ip}(n)}{\alpha+w_{i1}(n)+\dots+w_{ip}(n)}
	\end{align*}
	where an empty product is defined to equal 1.
\end{lemma}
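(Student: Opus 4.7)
The plan is to reduce the claim to the multi-variate recursion of Lemma \ref{I-prop} by summing over level sets of $|\vec{n}|$, exploiting the fact that the weights depend only on the total in-degree.

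First, I would set $w_i(k) = w_{i1}(k)+\dots+w_{ip}(k)$, so that under the hypothesis $w_{ij}(\vec{n})=w_{ij}(n_1+\dots+n_p)$ the quantity $w_{i1}(\vec{n})+\dots+w_{ip}(\vec{n})$ depends only on $k=|\vec{n}|$ and equals $w_i(k)$. Lemma \ref{I-prop}(ii) then becomes, for every $\vec{n}$ with $|\vec{n}|=k\geq 1$,
\begin{equation*}
I_i(\vec{n})=\frac{1}{\alpha+w_i(k)}\sum_{j=1}^{p} w_{ij}(k-1)\,I_i(n_1,\ldots,n_j-1,\ldots,n_p),
\end{equation*}
where we used that $w_{ij}(n_1,\ldots,n_j-1,\ldots,n_p)=w_{ij}(k-1)$ is also a constant on the level set.

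Next I would sum over $\vec{n}$ with $|\vec{n}|=k$. The prefactor $1/(\alpha+w_i(k))$ and the coefficients $w_{ij}(k-1)$ pull out of the sum, giving
\begin{equation*}
I_i(k)=\frac{1}{\alpha+w_i(k)}\sum_{j=1}^{p} w_{ij}(k-1)\sum_{\vec{n}:\,|\vec{n}|=k} I_i(n_1,\ldots,n_j-1,\ldots,n_p).
\end{equation*}
In the inner sum, the convention $I_i(\cdot)=0$ whenever a coordinate is negative lets me restrict to $n_j\geq 1$; after the substitution $m_j=n_j-1$ (keeping the other coordinates fixed), $\vec{m}$ ranges over all $\vec{m}\in\mathbb{N}^p$ with $|\vec{m}|=k-1$, each exactly once. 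Hence the inner sum equals $I_i(k-1)$ for every $j$, and the recursion collapses to
\begin{equation*}
I_i(k)=\frac{w_i(k-1)}{\alpha+w_i(k)}\,I_i(k-1),\qquad k\geq 1.
\end{equation*}

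Finally, I would iterate this one-dimensional recursion starting from the base case $I_i(0)=I_i(\vec{0})=\tfrac{1}{\alpha+w_i(0)}$ supplied by Lemma \ref{I-prop}(i). Unfolding yields the telescoping product in the statement. The only subtlety to be careful about is the combinatorial rearrangement in the step above — ensuring that the shift $n_j\mapsto n_j-1$ really produces a bijection onto $\{|\vec{m}|=k-1\}$ for each $j$, and that boundary terms with $n_j=0$ are correctly zeroed by the convention; the rest is formal iteration.
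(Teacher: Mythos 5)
Your proposal is correct and follows essentially the same route as the paper: both sum the recursion of Lemma \ref{I-prop} over the level set $\{|\vec{n}|=k\}$, use the zero convention to identify the shifted sum with $I_i(k-1)$, and obtain the one-step recursion $I_i(k)=\frac{w_i(k-1)}{\alpha+w_i(k)}I_i(k-1)$, which the paper closes by induction and you close by iterating from the base case---the same argument in two equivalent packagings.
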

\begin{proof}
	We prove the formula by induction. First note that it holds $k=0$. Assume that it holds for $k>0$. By Lemma \ref{I-prop} and the induction assumption
	\begin{multline*}
	I_i(k+1)=\sum_{n_1+\dots+n_p=k+1}I_i(\vec{n}) 
	=\sum_{n_1+\dots+n_p=k+1} \sum_{j=1}^{p} \frac{  w_{ij}(k)I_i(n_1,\ldots,n_j-1,\ldots,n_p) }{\alpha+w_{i1}(k+1) +\dots+w_{ip}(k+1) }\\
	= \sum_{j=1}^{p } \frac{w_{ij}(k)}{\alpha+w_{i1}(k+1) +\dots+w_{ip}(k+1)}\sum_{n_1+\dots+n_p=k+1}  I_i(n_1,\ldots,n_j-1,\ldots,n_p). 
	\end{multline*}
	Since, $I_i(n_1,\ldots,n_j-1,\ldots,n_p) = 0$ if $n_j=0$ we get that $\sum_{n_1+\dots+n_p=k+1}  I_i(n_1,\ldots,n_j-1,\ldots,n_p) = \sum_{n_1+\dots+n_p=k}  I_i(\vec{n})$. Carrying on the calculations yields
	\begin{multline*}
	I_i(k+1)= \sum_{j=1}^{p } \frac{w_{ij}(k)}{\alpha+w_{i1}(k+1) +\dots+w_{ip}(1+k)} I_i(k) 
	= \frac{(\sum_{j=1}^{p} w_{ij}(k))I_i(k)}{\alpha+w_{i1}(k+1) +\dots+w_{ip}(k+1)}
	\end{multline*}
	which proves the formula.
\end{proof}
This result is easily extended to an expression for $\mu^*_{ij}(\theta)$ given that the weight functions depend only on the total in-degree.
\begin{corollary}
	\label{laplace-corr}
	If the weight functions of a non-explosive multi-type preferential attachment tree satisfy $w_{ij}(\vec{n}) = w_{ij}(n_1+\dots+n_p)$, then for $\theta > 0$
	\begin{align*}
	\mu^*_{ij}(\theta) = \sum_{k=0}^{\infty} \frac{w_{i1}(k)}{\theta+w_{i1}(k)+\dots+w_{ip}(k)}\prod_{n=0}^{k-1}\frac{w_{i1}(n)+\dots+w_{ip}(n)}{\theta+w_{i1}(n)+\dots+w_{ip}(n)}
	\end{align*}
\end{corollary}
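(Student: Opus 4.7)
The plan is to combine two ingredients already assembled in the paper: the integral representation of $\mu_{ij}^*(\theta)$ coming from the Kolmogorov forward equations, and the closed form for $I_i(k)$ from Lemma \ref{I-k}, extended from $\theta=\alpha$ to general $\theta>0$.

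First I would recall from the start of Section \ref{General Framework} that, whenever $\vec{\xi}_i$ is non-explosive,
\begin{align*}
\mu_{ij}^*(\theta) \;=\; \int_0^\infty e^{-\theta s}\, \mathbb{E}(w_{ij}(\vec{\xi}_i(s)))\, ds,
\end{align*}
and non-explosion is exactly the hypothesis of the corollary. Using the assumption $w_{ij}(\vec{n}) = w_{ij}(n_1+\dots+n_p)$ and writing $\xi_i^\Sigma(s) = \xi_{i1}(s)+\dots+\xi_{ip}(s)$, the expectation inside the integral is $\sum_{k\geq 0} w_{ij}(k)\, \mathbb{P}(\xi_i^\Sigma(s)=k)$. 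Since all terms are nonnegative, Tonelli allows the interchange of sum and integral, giving
\begin{align*}
\mu_{ij}^*(\theta) \;=\; \sum_{k=0}^\infty w_{ij}(k)\, I_i(k,\theta), \qquad I_i(k,\theta) := \int_0^\infty e^{-\theta s}\, \mathbb{P}(\xi_i^\Sigma(s)=k)\, ds.
\end{align*}

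Next I would note that Lemma \ref{I-k} is stated for $\theta=\alpha$, but its proof is an induction on $k$ whose only analytic input is integration by parts against the Kolmogorov forward equation for $\xi_i^\Sigma(\cdot)$. The decay parameter plays a purely formal role and does not interact with the combinatorial structure of the recursion. Repeating the argument verbatim with $\alpha$ replaced by $\theta>0$ throughout yields
\begin{align*}
I_i(k,\theta) \;=\; \frac{1}{\theta+w_{i1}(k)+\dots+w_{ip}(k)}\prod_{n=0}^{k-1}\frac{w_{i1}(n)+\dots+w_{ip}(n)}{\theta+w_{i1}(n)+\dots+w_{ip}(n)}.
\end{align*}
Substituting this expression into the sum above produces the claimed formula (where the $w_{i1}(k)$ appearing in the numerator in the statement should be read as $w_{ij}(k)$, since the left-hand side depends on $j$ only through this factor).

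There is essentially no obstacle here: the non-explosion hypothesis supplies both the integral representation of $\mu_{ij}^*$ at the start of Section \ref{General Framework} and the validity of the forward equations underpinning Lemma \ref{I-k}; the sum-integral interchange is justified by non-negativity; and the induction in Lemma \ref{I-k} goes through without modification for any $\theta>0$. The corollary is thus a direct unpacking of the preceding lemma together with the $\theta$-Laplace identity for $\mu_{ij}$.
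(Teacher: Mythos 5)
Your proposal is correct and follows essentially the same route as the paper: express $\mu_{ij}^*(\theta)$ via the density $\mathbb{E}(w_{ij}(\vec{\xi}_i(s)))=\sum_k w_{ij}(k)\mathbb{P}(\xi_i^{\Sigma}(s)=k)$, interchange sum and integral by non-negativity, and plug in the product formula of Lemma \ref{I-k}. You are in fact slightly more explicit than the paper on two points it glosses over---that the induction of Lemma \ref{I-k} runs verbatim with $\alpha$ replaced by any $\theta>0$, and that the numerator $w_{i1}(k)$ in the stated formula should read $w_{ij}(k)$---both of which are correct observations.
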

\begin{proof}
	Let $\xi^{\Sigma}_i(t)=\xi_{i1}(t)+\dots+\xi_{ip}(t)$ then
	\begin{multline*}
	\mu_{ij}^*(\theta) = \int_{0}^{\infty} e^{-\theta s } \mu_{ij}(ds) 
	= \int_{0}^{\infty} e^{-\theta s } \sum_{k=0}^{\infty}w_{ij}(k)\mathbb{P}(\xi^{\Sigma}_i(s)=k) ds\\
	=  \sum_{k=0}^{\infty}w_{ij}(k)\int_{0}^{\infty} e^{-\theta s }\mathbb{P}(\xi^{\Sigma}_i(s)=k) ds =\sum_{k=0}^{\infty} w_{ij}(k)I_i(k, \theta)
	\end{multline*}
	where the second to last equality follows from monotone convergence. An application of Lemma \ref{I-k} yields the desired result.
\end{proof}
\begin{remark}
	The above expression for $\mu^*_{ij}(\theta)$ is in accordance with the results of \cite{RSA} and \cite{Deijfen} when $p=1$. Also, note that $\mu_{ij}^*(\theta)$ may well be infinite if $\theta<\theta_0$, with $\theta_0$ as in \eqref{A1}.
\end{remark}

Given that the weight functions are linear in total degree we can derive an even more explicit result for the Laplace transform $\mu_{ij}^*(\theta)$.
\begin{corollary}
	\label{laplacetransform}
	If the weight functions of the multi-type preferential attachment model satisfy $w_{ij}(\vec{n}) = \gamma_{ij}(n_1+\dots+n_p) + \beta_{ij} $, with $\gamma_{ij}\geq 0,\beta_{ij}> 0$ and $\gamma_{i1}+\dots+\gamma_{ip}>0$, then
	\begin{align*}
	\mu^*_{ij}(\theta) = \int_{0}^{\infty} e^{-\theta s }\mu_{ij}(ds) = \begin{cases}
	\infty &\text{ if } 0 < \theta \leq\gamma_{i1}+\dots+\gamma_{ip},\\
	\frac{\beta_{ij}}{\theta} + \frac{\gamma_{ij} (\beta_{i1} +\dots+ \beta_{ip})   }{  \theta(\theta-(\gamma_{i1}+\dots+\gamma_{ip}))   } &\text{ if }\theta > \gamma_{i1}+\dots+\gamma_{ip}\\
	\end{cases}
	\end{align*}
\end{corollary}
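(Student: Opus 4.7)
My starting point will be Corollary~\ref{laplace-corr}, which under the linear weights $w_{ij}(k)=\gamma_{ij}k+\beta_{ij}$ (writing $\Gamma_i:=\gamma_{i1}+\dots+\gamma_{ip}$ and $B_i:=\beta_{i1}+\dots+\beta_{ip}$, so $w_i(k)=\Gamma_i k+B_i$) reads
\[
\mu_{ij}^*(\theta) \;=\; \sum_{k=0}^\infty \frac{\gamma_{ij}k+\beta_{ij}}{\theta+\Gamma_i k+B_i}\,P_k,\qquad P_k:=\prod_{n=0}^{k-1}\frac{\Gamma_i n+B_i}{\theta+\Gamma_i n+B_i}.
\]
The key algebraic observation is the telescoping identity $P_k-P_{k+1}=\theta P_k/(\theta+\Gamma_i k+B_i)$, which is immediate from the definition of $P_k$. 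Since $\sum_n \theta/(\theta+\Gamma_i n+B_i)$ diverges for every $\theta>0$, the infinite product defining $\lim_k P_k$ vanishes; summing the telescope therefore yields $\sum_k P_k/(\theta+\Gamma_i k+B_i)=1/\theta$.

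I would then decompose the numerator affinely in $\Gamma_ik+B_i$, i.e.\ $\gamma_{ij}k+\beta_{ij}=\tfrac{\gamma_{ij}}{\Gamma_i}(\Gamma_ik+B_i)+(\beta_{ij}-\gamma_{ij}B_i/\Gamma_i)$. Using $(\Gamma_ik+B_i)P_k/(\theta+\Gamma_ik+B_i)=P_{k+1}$ together with the telescope above, this rewrites
\[
\mu_{ij}^*(\theta) \;=\; \frac{\gamma_{ij}}{\Gamma_i}\sum_{k\ge 1}P_k \;+\; \Bigl(\beta_{ij}-\frac{\gamma_{ij}B_i}{\Gamma_i}\Bigr)\cdot\frac{1}{\theta},
\]
so the entire problem is reduced to evaluating $S(\theta):=\sum_{k\ge 1}P_k$. (If $\gamma_{ij}=0$ the first sum drops out and one reads off $\mu_{ij}^*(\theta)=\beta_{ij}/\theta$ on all of $(0,\infty)$, a trivial one-line addendum.)

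To handle $S(\theta)$, I would rewrite $P_k=\frac{\Gamma(b)\Gamma(k+a)}{\Gamma(a)\Gamma(k+b)}$ with $a=B_i/\Gamma_i$ and $b=(\theta+B_i)/\Gamma_i$, so that $b-a=\theta/\Gamma_i$. Stirling gives $P_k\sim Ck^{-\theta/\Gamma_i}$, so $S(\theta)=\infty$ precisely when $\theta\le\Gamma_i$, which accounts for the divergence branch. For $\theta>\Gamma_i$, insert the Beta representation
\[
\frac{\Gamma(k+a)}{\Gamma(k+b)}=\frac{1}{\Gamma(b-a)}\int_0^1 t^{k+a-1}(1-t)^{b-a-1}\,dt,
\]
swap sum and integral (monotone convergence), sum the geometric series $\sum_k t^k=(1-t)^{-1}$, and recognize the remaining $\int_0^1 t^{a-1}(1-t)^{b-a-2}dt$ as a Beta integral---convergent exactly when $b-a-1>0$, i.e.\ the same regime. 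This evaluates to $\sum_{k\ge 0}P_k=(b-1)/(b-a-1)=(\theta+B_i-\Gamma_i)/(\theta-\Gamma_i)$, whence $S(\theta)=B_i/(\theta-\Gamma_i)$; plugging back and simplifying reproduces the stated formula.

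The main obstacle is the closed-form evaluation of $\sum_kP_k$: the telescope disposes of the ``constant'' part of $\gamma_{ij}k+\beta_{ij}$ for free, but the ``$k$'' part forces a hypergeometric sum that has no elementary antiderivative. The Beta-integral trick is the slick way through, and pleasantly the condition $b-a-1>0$ that legitimizes the evaluation is the same inequality $\theta>\Gamma_i$ that governs the convergence/divergence dichotomy of the corollary.
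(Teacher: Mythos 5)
Your proposal is correct, and it reaches the stated formula by a genuinely different (though related) route. The paper also starts from Corollary~\ref{laplace-corr}, but it rewrites the entire summand as ratios of Gamma functions, splits the numerator $\gamma_{ij}k+\beta_{ij}$ into two hypergeometric-type sums, and evaluates both by quoting the finite-sum identity $\sum_{k=0}^{n}\Gamma(k+a)/\Gamma(k+c)=\frac{1}{1+a-c}\bigl(\frac{\Gamma(n+1+a)}{\Gamma(n+c)}-\frac{\Gamma(a)}{\Gamma(c-1)}\bigr)$ from \cite{Deijfen}, with Stirling's formula giving both the threshold $\theta>\gamma_{i1}+\dots+\gamma_{ip}$ and the limiting value. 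You instead handle the ``constant'' part exactly via the telescoping identity $P_k-P_{k+1}=\theta P_k/(\theta+\Gamma_i k+B_i)$, which gives $\sum_k P_k/(\theta+\Gamma_i k+B_i)=1/\theta$ for every $\theta>0$ with no Gamma-function input at all, and then reduce the problem to the single sum $\sum_k P_k$, evaluated through the Beta-integral representation of $\Gamma(k+a)/\Gamma(k+b)$; Stirling is used only to identify the divergence regime. I checked the key steps---the identity $(\Gamma_i k+B_i)P_k/(\theta+\Gamma_i k+B_i)=P_{k+1}$, the vanishing of $\lim_k P_k$, the value $\sum_{k\ge 0}P_k=(b-1)/(b-a-1)=(\theta+B_i-\Gamma_i)/(\theta-\Gamma_i)$ for $\theta>\Gamma_i$, and the final simplification to $\beta_{ij}/\theta+\gamma_{ij}(\beta_{i1}+\dots+\beta_{ip})/\bigl(\theta(\theta-(\gamma_{i1}+\dots+\gamma_{ip}))\bigr)$---and they are all sound; the splitting of the series is legitimate because the telescoped piece converges absolutely and all original terms are non-negative, so the dichotomy in $\theta$ is controlled entirely by $\sum_k P_k$ when $\gamma_{ij}>0$. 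What your route buys is self-containedness (no imported summation identity) and a transparent source of the $1/\theta$ term; the paper's route is more mechanical but leans on the cited identity. Your one-line remark that $\gamma_{ij}=0$ yields $\mu^*_{ij}(\theta)=\beta_{ij}/\theta$ on all of $(0,\infty)$ corresponds to the paper's separate Poisson-process observation for that case, and is if anything stated more precisely than in the paper.
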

\begin{proof}
	For convenience set $\gamma = \gamma_{i1}+\dots+\gamma_{ip}$ and $\beta = \beta_{i1}+\dots+\beta_{ip}$. First assume that $\gamma_{ij}>0$.
	By Corollary \ref{laplace-corr}, whenever the Laplace transform exists, we have
	\begin{align*}
	\mu_{ij}^*(\theta) = \sum_{k=0}^{\infty} \frac{w_{i1}(k)}{\theta+w_{i1}(k)+\dots+w_{ip}(k)}\prod_{n=0}^{k-1}\frac{w_{i1}(n)+\dots+w_{ip}(n)}{\theta+w_{i1}(n)+\dots+w_{ip}(n)} = \sum_{k=0}^{\infty}\frac{ \gamma_{ij}k+\beta_{ij} }{\gamma k + \theta + \beta} \prod_{n=0}^{k-1} \frac{\gamma n + \beta}{\gamma n +\theta + \beta}.
	\end{align*}
	Using that $\prod_{i=0}^{k}(i+c) = \frac{\Gamma (k+1+c)}{\Gamma(c)}$ we get
	\begin{align*}
	\mu_{ij}^*(\theta)
	= \frac{\gamma_{ij} \Gamma(\theta/\gamma+\beta/\gamma)  }{\gamma \Gamma(\beta /\gamma )}\left(  \sum_{k=0}^{\infty} \frac{   \Gamma(k+1+\beta/\gamma)    }{\Gamma(k+1+\theta/\gamma+\beta/\gamma)  }+(\beta_{ij}/\gamma_{ij}-\beta/\gamma)\sum_{k=0}^{\infty} \frac{ \Gamma(k+\beta/\gamma)}{ \Gamma( k+1+\theta/\gamma + \beta/\gamma  )}\right).
	\end{align*}
	Using the relation $\sum_{k=0}^{n}  \frac{\Gamma(k+a)}{\Gamma(k+c)} = \frac{1}{1+a-c}\left(\frac{\Gamma(n+1+a)}{\Gamma(n+c)}-\frac{\Gamma(a)}{\Gamma(c-1)} \right)  $, valid for all real numbers $a,c$ (see \cite{Deijfen}), together with Stirling's formula $\Gamma(k+c)/\Gamma(k) \sim k^c$ we get
	\begin{align*}
	 \sum_{k=0}^{\infty} \frac{\Gamma(k+a)}{\Gamma(k+c)} =\frac{1}{c-a-1} \frac{\Gamma(a)}{\Gamma(c-1)}, \text{ if } 1+a<c.
	\end{align*}
	We note that the sum diverges if $1+a\geq c$. Hence, with $a=1+\beta/\gamma$ and $c=1+\theta/\gamma+\beta/\gamma$, we get that $\mu_{ij}^*(\theta)$ converges if and only if $\theta > \gamma = \gamma_{i1}+\dots+\gamma_{ip}$. For $\theta > \gamma$, we get
	\begin{align*}
	\mu_{ij}^*(\theta) = 	\frac{\gamma_{ij} \Gamma(\theta/\gamma+\beta/\gamma)  }{\gamma \Gamma(\beta /\gamma )}\left( \frac{\Gamma(1+\beta / \gamma )}{(\theta / \gamma -1 ) \Gamma(\theta / \gamma + \beta/ \gamma)}  +(\beta_{ij}/\gamma_{ij}-\beta/\gamma) \frac{ \Gamma(\beta / \gamma)   }{  \theta / \gamma \Gamma(\theta / \gamma + \beta / \gamma) }     \right).
	\end{align*}
	All together we then have (again using $\prod_{i=0}^{k}(i+c) = \frac{\Gamma (k+1+c)}{\Gamma(c)}$),
	\begin{align*}
	\mu_{ij}^*(\theta) = \begin{cases}
	\infty, &\text{ if } \theta \leq \gamma_{i1}+\dots+\gamma_{ip} \\
	\frac{\beta_{ij}}{\theta} + \frac{\gamma_{ij}(\beta_{i1}+\dots+\beta_{i2})}{\theta (\theta-(\gamma_{i1}+\dots+\gamma_{ip}))}, &\text{ if } \theta > \gamma_{i1}+\dots+\gamma_{ip}.
	\end{cases}
	\end{align*}
	Next assume that $\gamma_{ij} = 0$. Then $\mu_{ij}^*(\theta)$ is just the Laplace transform of the intensity measure of a Poisson process which is in accordance with the formula, i.e. $\mu_{ij}^*(\theta) = \frac{\beta_{ij}}{\theta}$.
\end{proof}
Corollary \ref{laplacetransform} immediately implies that condition \eqref{A1} is satisfied for multi-type linear preferential attachment since
\begin{align*}
\min_i \sum_{j=1}^{p} \mu^*_{ij}(\theta) \leq \rho(\mu^*(\theta))
\end{align*}
and $\lim\limits_{\theta \downarrow \gamma_{i1}+\dots+\gamma_{ip} } \mu^*_{ij}(\theta) = \infty$.

Using Corollary \ref{laplacetransform}, it is  possible to calculate the Perron root of $\mu^*(\theta)$ and Malthusian parameter $\alpha$ as well as the corresponding eigenvectors. However, already for $p=2$ these expressions are rather complicated and are better calculated numerically.

We are finally ready to prove Theorem \ref{thm3}. 
\begin{proof}[Proof of Theorem \ref{thm3}]
	Given that the model starts with one vertex we have by Theorem \ref{multitypethm} that
	\begin{align*}
	\frac{Z_i^k(t)}{Z_i(t)} \to \alpha \frac{ u_i}{u_1+\dots+u_p} I_i(k) \text{ as } t \to \infty
	\end{align*}
	i.e. the proportion of type $i$ vertices with $k$ children in total converges to the expression on the right-hand side above. Again let $\gamma = \gamma_{i1}+\dots+\gamma_{ip}$ and $\beta = \beta_{i1}+\dots + \beta_{ip}$. First assume that $\gamma>0$. Then by Lemma \ref{I-k}
	\begin{align*}
	I_i(k) = \frac{\prod_{n=0}^{k-1} \gamma n+\beta   }{\prod_{n=0}^{k} \alpha + \gamma n+\beta } = \frac{\Gamma(\frac{\alpha+\beta}{\gamma})}{\gamma\Gamma(\frac{\beta}{\gamma})} \frac{ \Gamma(k+\frac{\beta }{\gamma})   }{ \Gamma(k+1+\frac{\alpha+\beta }{\gamma})   }.
	\end{align*}
	By the same methods as in the proof of Corollary \ref{laplacetransform}, we get
	\begin{align*}
	I_i(k) \sim C \cdot k^{-(1+\frac{\alpha}{\gamma})}
	\end{align*}
	and the first part of the theorem is proved.
	
	Secondly, assume that  $\gamma_{i1}+\dots+\gamma_{ip}=0$. Then
	\begin{align*}
	I_i(k) = \frac{1}{\alpha +\beta } \left( \frac{\beta}{\alpha +\beta } \right)^k = \frac{1}{\alpha +\beta } e^{-k(\log(1+\frac{\alpha}{   \beta }))}
	\end{align*}
	and the second part of the theorem is proved.
\end{proof}

\newpage
\section{Numerical Examples}
\label{Numerical}
We now numerically investigate the behavior of the asymptotic composition of the vertex population as well as the exponent of the empirical degree distribution for some natural examples.

Consider first the two-type linear preferential attachment model with $w_{11}(k) = \gamma_{11}k+1 $ and $w_{12}(k) =w_{21}(k) = w_{22}(k) = k+1$. We now vary the rate at which type 1 vertices generate new type 1 vertices, i.e. $\gamma_{11}$, while keeping everything else fixed. For $\gamma_{11}<1$ we expect fewer type 1 than type 2 vertices in the graph and the opposite for $\gamma_{11}>1$. This is indeed true, see Figure 1. Recall from Theorem \ref{thm3} that the asymptotic behavior of the empirical degree distribution is given by
\begin{align}
\label{numeq}
\lim\limits_{t\to \infty}	\frac{Z_i^{k}(t)}{Z(t)} \sim 
C_1 \cdot k^{-(1+\frac{\alpha}{\gamma_{i1}+\gamma_{i2}})}\qquad &\text{if } \gamma_{i1}+\gamma_{i2}>0.
\end{align}
Hence, a lower absolute value of the power law exponent corresponds to a heavier tail of the degree distribution. Clearly, for large values of $\gamma_{11}$, type 1 will have a heavier tail, and this can be observed in Figure 1.
\begin{figure}[H]
	\includegraphics[width = 11cm, height = 6cm]{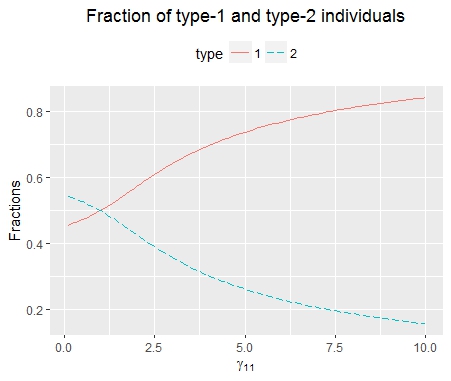}
	\includegraphics[width = 11cm, height = 6cm]{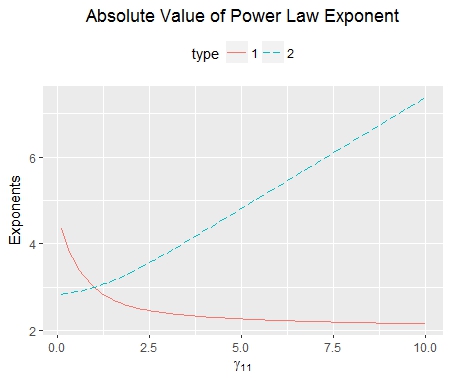}
	\caption{$w_{11}(k) = \gamma_{11}k+1 $ and $w_{12}(k) =w_{21}(k) = w_{22}(k) = k+1$}
\end{figure}

Next consider the case when $w_{12}(k) = \gamma_{12}k+1 $ and $w_{11}(k) =w_{21}(k) = w_{22}(k) = k+1$. We now vary the rate at which type 1 vertices generate new type 2 vertices while keeping everything else fixed. In Figure 2 we can see that for $\gamma_{12}<1$ there is a majority of type 1 vertices, while for $\gamma_{12}>1$ there is a majority of type 2 vertices. In fact, the qualitative behavior of the asymptotic composition is the opposite of previous model, compare Figure 1 and 2. Although there are more type 2 vertices for values of $\gamma_{12}>1$ we note that it is type 1 vertices that generate them. Hence, there should still be more type 1 vertices with high total degree. This is indeed true, and can be observed in Figure 2. We note that power law exponents are the same as for the previous model---this follows from \eqref{numeq}.

\begin{figure}[H]
	\includegraphics[width = 11cm, height = 6cm]{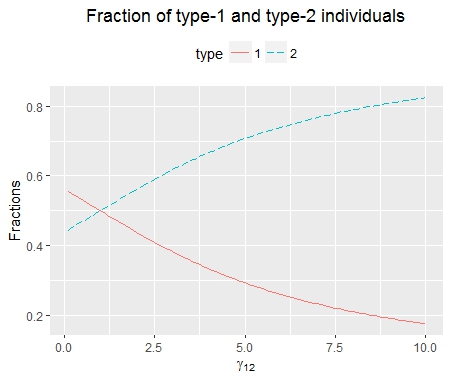}
	\includegraphics[width = 11cm, height = 6cm]{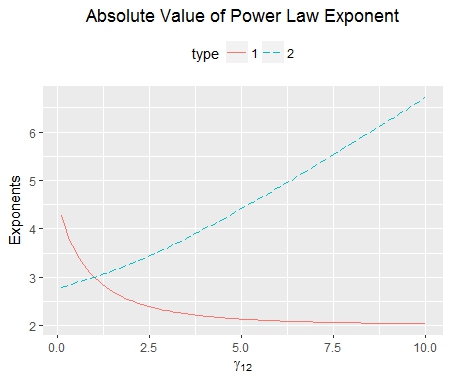}
	\caption{$w_{12}(k) = \gamma_{12}k+1 $ and $w_{11}(k) =w_{21}(k) = w_{22}(k) = k+1$}
\end{figure}

The value of the parameters $\{\beta_{ij} \}$ influences the power law exponents through the Malthusian parameter, and the asymptotic composition through the left eigenvector $u$. For the last example we consider the model where $w_{11}(k) = \gamma_{11} k + 1$, $w_{12}(k)=k+10$, $=w_{21}(k) = k+1$ and $w_{22}(k) = k + 10$. Hence, the model has larger constants for generating type 2 vertices. Even for large values of $\gamma_{11}$ there are still more type 2 than type 1 vertices in the graph, i.e. the constants $\beta_{12} = 10$ and $\beta_{22} = 10$ have a large influence on the asymptotic composition of the vertex population. Comparing Figure 3 with Figure 1 and 2 we see that degree distributions have thinner tails. This is because the larger values of the constants  $\beta_{12}$ and  $\beta_{21}$ weaken the preferential attachment mechanic in that it puts more weight on vertices with lower degree, e.g. degree 0 vertices have rate 10 instead of 1 as in the previous two examples.

Comparing the figures above we conclude that $\{\beta_{ij} \}$ has a large influence on asymptotic composition of the vertex population, and less influence on the degree distributions. 
\begin{figure}[H]
	\includegraphics[width = 11cm, height = 6cm]{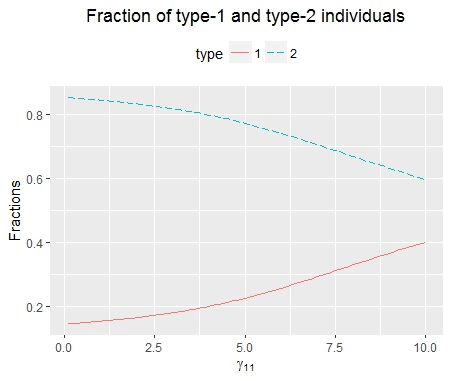}
	\includegraphics[width = 11cm, height = 6cm]{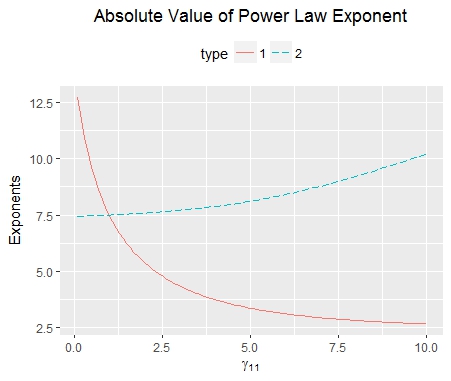}
	\caption{$w_{11}(k) = \gamma_{11} k + 1$, $w_{12}(k)=k+10$, $=w_{21}(k) = k+1$ and $w_{22}(k) = k + 10$.}
\end{figure}

\section{Further Work}
There are special cases of the model which can be studied further. For instance, the framework can be applied to the case when the rate functions are given by $w_{ij}(\vec{n})=w_{ij}(n_j)$, i.e. when the reproduction processes of a vertex are independent. Using the framework one can identify the limit of $\lim\limits_{t\to \infty}\frac{Z_i^{\phi^j}(t)}{Z(t)}$, where $Z_i^{\phi^j}(t)$ is the number of vertices at time $t$ of type $i$ with $k$ type $j$ children. For instance, if $w_{ij}(\vec{n}) = \gamma_{ij}n_j+\beta_{ij}$ then the asymptotic behavior in $k$ of this fraction is given by
\begin{align*}
\lim\limits_{t\to \infty}	\frac{Z_i^{\phi^j}(t)}{Z(t)} \sim C \cdot k^{-(1+\frac{\alpha}{\gamma_{ij}})}, \qquad C\in \mathbb{R}.
\end{align*}
This follow by noting that, by Theorem \ref{multitypethm}, we have
\begin{align*}
\lim\limits_{t\to \infty}	\frac{Z_i^{\phi^j}(t)}{Z(t)} = C_1\alpha \int_{0}^{\infty}e^{-\alpha s} \mathbb{P}(\xi_{ij}(s) = k )ds = C \sum_{\vec{n}:\ n_j=k} I_i(\vec{n})
\end{align*}
and, by Lemma \ref{I-prop} and the proof of Corollary \ref{laplacetransform}, we get
\begin{multline*}
\sum_{\vec{n}:\ n_j = k} I_i(\vec{n}) 
=  \frac{1}{\alpha+w_{ij}(k)} \prod_{n=0}^{k-1} \frac{w_{ij}(k)}{\alpha+w_{ij}(k)} \\
=  \frac{1}{\alpha+\gamma_{ij}k+\beta_{ij}} \prod_{n=0}^{k-1} \frac{\gamma_{ij}n+\beta_{ij}}{\alpha+\gamma_{ij}n+\beta_{ij}} = \frac{ \Gamma(\frac{\alpha+\beta_{ij}}{\gamma_{ij}})  }{\gamma_{ij} \Gamma(\frac{\beta_{ij}}{\gamma_{ij}}) } \frac{\Gamma(k+\frac{\beta_{ij}}{\gamma_{ij}})}{\Gamma(k+1+\frac{\alpha+\beta_{ij}}{\gamma_{ij}})}\sim C \cdot k^{-(1+\frac{\alpha}{\gamma_{ij}})}.
\end{multline*}

However, an expression for how the fraction of type $i$ vertices with $k$ children in total behaves as $k$ grows large does not follow easily from the framework, and is left as an open problem.

There are also extensions of the model which can be studied. Following \cite{Deijfen} we could allow for vertex death. The framework developed here can not be directly applied to this situation, but it should be possible to extend to allow for vertex death. With no vertex death the preferential attachment graph is a tree, and questions about the largest component are not interesting. However, with vertex death the graph becomes a forest, and questions about the largest component arise. Will a large component emerge? If so, how large is it?

\section{Acknowledgments}
I would like to extend my gratitude towards my supervisor Mia Deijfen for introducing me to the model, and for helpful comments on the manuscript. I would also like to thank KaYin Leung and the Journal Club at Stockholm University for helpful feedback, as well as Professor Olle Nerman for pointing out the newest results on general multi-type branching processes. 
\bibliography{articleRef}
\bibliographystyle{plain}

\end{document}